\documentclass{article}
\usepackage{arxiv}

\usepackage[T1]{fontenc}    
\usepackage{xcolor}
\usepackage{url}
\usepackage{amsmath, amsfonts, amssymb, mathtools, mathrsfs, extarrows, esint}
\usepackage{paralist}
\usepackage[numbers,sort]{natbib}

\numberwithin{equation}{section}

\usepackage{amsthm}
\usepackage{thmtools}
\newtheorem{theorem}{Theorem}[section]
\declaretheorem[sibling=theorem]{lemma}
\declaretheorem[sibling=theorem]{proposition}
\declaretheorem[sibling=theorem]{corollary}
\declaretheorem[sibling=theorem]{definition}
\declaretheorem[sibling=theorem]{remark}
\declaretheorem[sibling=theorem]{example}
\declaretheorem[sibling=theorem]{assumption}

\usepackage[colorlinks,linkcolor=violet,citecolor=blue,urlcolor=black]{hyperref}
\usepackage[capitalize,nameinlink,noabbrev]{cleveref}
\crefdefaultlabelformat{#2\textup{#1}#3} 
\Crefname{assumption}{Assumption}{Assumptions}

\usepackage{tikz}
\usetikzlibrary{graphs, quotes, positioning, shapes.geometric, arrows.meta, fit, positioning, shapes.geometric, shapes.arrows, decorations.markings}
\usepackage{tikz-3dplot}

\newcommand{\R}{\mathbb{R}}
\newcommand{\curl}{\nabla\times}
\renewcommand{\div}{\nabla\cdot}
\newcommand{\Hc}[1]{H({\rm curl},#1)}
\newcommand{\Hd}[1]{H({\rm div},#1)}
\DeclareMathOperator{\dist}{dist}
\DeclareMathOperator{\diam}{diam}
\DeclareMathOperator{\sgn}{sgn}
\DeclareMathOperator{\Area}{Area}

\title{Magneto-Acousto-Electric Tomography \\ with Magnetic Field Measurements: \\ Modeling, Inversion and Stability}
\author{Lingyun Qiu$^{1,2}$, Siqin Zheng$^{3}$ \\
	$^1$ Yau Mathematical Sciences Center, Tsinghua University, Beijing 100084, China\\
    $^2$ Yanqi Lake Beijing Institute of Mathematical Sciences and Applications, Beijing 101408, China\\
    (\href{mailto:lyqiu@tsinghua.edu.cn}{lyqiu@tsinghua.edu.cn})\\
	$^3$ Department of Mathematical Sciences, Tsinghua University, Beijing 100084, China \\
	(\href{mailto:zhengsq21@mails.tsinghua.edu.cn}{zhengsq21@mails.tsinghua.edu.cn}) \\
}

\renewcommand{\shorttitle}{MAET with Magnetic Field Measurements}

\begin{document}

\maketitle

\begin{abstract}
    Magneto-acousto-electric tomography (MAET) combines ultrasound with a static magnetic field to infer the electrical conductivity of an object.
    In this paper, we present a rigorous quasi-static mathematical model for MAET with magnetic field measurements and introduce an adjoint problem to decouple the resulting hybrid inverse problem. 
    This yields a two-step inversion procedure: solving an acoustic inverse source problem and then recovering the conductivity from an internal current density.
    For the second step, by exploiting the analytic structure of a coil-determined field, we establish an interior H\"older stability estimate without imposing a pointwise nonzero constraint on the internal data.
    We further prove that, under explicit smallness assumptions on the conductivity and coil geometry, the conductivity can be recovered with region-of-interest Lipschitz stability in both bounded and half-space geometries.
\end{abstract}

\keywords{Inverse problems \and Coupled-physics imaging \and Magnetic field measurements \and Quasi-static modeling \and Conditional stability}

\msc{35J25 \and 35R30 \and 65N21 \and 92C55}

\section{Introduction}

In clinical diagnosis and pathological research, electrical conductivity is a crucial physical property of biological tissues for distinguishing between normal and pathological tissues. Therefore, conductivity imaging plays a critical role in medical imaging. 

In mathematics, an imaging problem is typically modeled as an inverse problem, and the imaging resolution depends on the stability of the corresponding inverse problem. The conventional method, electrical impedance tomography (EIT), has low resolution because its inverse problem exhibits only logarithmic stability \cite{Uhlmann2009}. This means that errors in data are significantly amplified during the inversion procedure, resulting in highly inaccurate reconstructions. To achieve more accurate conductivity inversion, various coupled-physics imaging methods have been developed in recent years. These methods combine the electrical process with other processes such as acoustics and magnetism, utilizing observable signals generated by their interaction, such as acousto-electric effect and electromagnetic induction, to reconstruct the conductivity. The inverse problems of these methods typically demonstrate better stability, often of a H\"older or even Lipschitz type, leading to higher resolution. For detailed introductions on coupled-physics imaging, we refer to \cite{Widlak2012} and the references therein.

Magneto-acousto-electric tomography (MAET) is a specific modality of coupled-physics imaging method designed based on the Hall effect, where vibrating charged particles in a magnetic field are subjected to the Lorentz force, thereby generating electromagnetic signals. When first proposed, MAET relied on measurements of the induced voltage on the boundary of the imaging object \cite{Wen1998,Haider2008,Roth2009}. Kunyansky develops a rigorous and general mathematical model and an almost explicit inversion procedure \cite{Kunyansky2012}. One of the key steps, recovering the conductivity from internal current densities, requires suitable design of boundary conditions to generate linearly independent current densities. A potential approach is explored in \cite{Qiu2023}, where local Lipschitz stability results are derived for both full data and partial data. Kunyansky et al. also consider other settings for better applications \cite{Kunyansky2017,Kunyansky2023}. As a noninvasive technique that does not involve ionizing radiation, MAET poses fewer risks than traditional X-ray imaging and thus holds promise for clinical applications.

More recently, measurements of the induced electromotive force (emf) in coils placed outside the object have been investigated \cite{Zengin2012,Guo2015,Zengin2016}. See \cref{fig:setup} for a simple experimental setup. This variant is referred to as MAET with magnetic field measurements, since by Faraday's law, the measured emf in a coil corresponds to the rate of change of the magnetic flux through its bounding surface. Compared to the boundary measurements using electrodes, this contactless approach avoids the problems caused by contact impedances and the screening effect of superficial insulating layers such as skin and bone \cite{Tarjan1968,Gencer1999}. It also eliminates patient discomfort in clinical diagnosis.

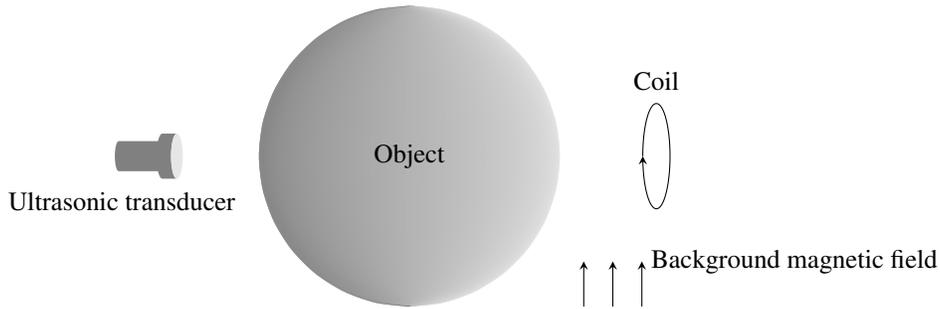
\begin{figure}[htbp]
    \centering
    \tdplotsetmaincoords{90}{105}
    \begin{tikzpicture}[tdplot_main_coords, scale=2, >=stealth]
        \foreach \phi in {105,106,...,283} {
            \pgfmathsetmacro\shade{10+(\phi-105)*4/18}
            \fill[black!\shade] 
                plot[smooth, domain=0:180, samples=15, variable=\theta] 
                ({sin(\theta)*cos(\phi)}, {sin(\theta)*sin(\phi)}, {cos(\theta)}) --
                plot[smooth, domain=180:0, samples=15, variable=\theta] 
                ({sin(\theta)*cos(\phi+2)}, {sin(\theta)*sin(\phi+2)}, {cos(\theta)}) --
                cycle;
        }
        \node at (0,0,0) {Object};
        \draw[->] plot[smooth, domain=0:360, samples=60, variable=\theta]
            ({0.35*cos(\theta)}, 1.7, {0.35*sin(\theta)});
        \node at (0,1.7,0.5) {Coil};
        \pgfmathsetmacro\sr{0.1}
        \fill[black!50]
            plot[smooth, domain=0:180, samples=30, variable=\theta]
            ({\sr*sin(\theta)}, -2, {\sr*cos(\theta)}) --
            plot[smooth, domain=180:0, samples=30, variable=\theta]
            ({\sr*sin(\theta)}, -1.7, {\sr*cos(\theta)}) --
            cycle;
        \pgfmathsetmacro\lr{0.15}
        \fill[black!50]
            plot[smooth, domain=0:180, samples=30, variable=\theta]
            ({\lr*sin(\theta)}, -1.7, {\lr*cos(\theta)}) --
            plot[smooth, domain=180:0, samples=30, variable=\theta]
            ({\lr*sin(\theta)}, -1.6, {\lr*cos(\theta)}) --
            cycle;
        \fill[black!10]
            plot[smooth, domain=0:360, samples=60, variable=\theta]
            ({\lr*sin(\theta)}, -1.6, {\lr*cos(\theta)});
        \node at (-1.2,-2.3,-0.3) {Ultrasonic transducer};
        \draw[->] (0,1.2,-1) -- (0,1.2,-0.7);
        \draw[->] (0,1.4,-1) -- (0,1.4,-0.7);
        \draw[->] (0,1.6,-1) -- (0,1.6,-0.7) node[anchor=west]{Background magnetic field};
    \end{tikzpicture}
    \caption{A simple experimental setup of MAET with magnetic field measurements}
    \label{fig:setup}
\end{figure}

Existing models of MAET with magnetic field measurements are based on the Maxwell's system under simplifying approximations, such as neglecting displacement currents and assuming low conductivity, which lack rigorous mathematical justification. These simplifications, while useful for initial experimental validation \cite{Kaboutari2019,Tetik2023,Gozu2024}, restrict the imaging resolution, reliability, and clinical applicability. Therefore, establishing a rigorous mathematical foundation is essential to advance this technique by properly decoupling the interacting physics and guiding experimental designs for stable reconstruction. This work aims to fill this gap by developing a rigorous model and inversion framework, analogous to MAET with voltage measurements \cite{Kunyansky2012}. Additionally, we analyze the stability of conductivity reconstruction, offering insights into appropriate configurations of detection coils.

The inverse problem of a coupled-physics imaging technique can typically be decoupled into two steps: first, reconstructing some internal functionals or vector fields from boundary or external measurements; second, using these to recover the unknown coefficients. Further examples and details can be found in \cite{Bal2013a,Alberti2018} and the references therein. The inversion procedure of MAET with voltage measurements, as developed in \cite{Kunyansky2012}, conforms to this paradigm. 

Regarding MAET with magnetic field measurements, since the underlying principle is similar, we aim to follow this framework. We adopt a quasi-static model of the Hall effect instead of the full Maxwell's system, because the electromagnetic induction happens much more rapidly than ultrasonic propagation. This significant difference in time scales plays a crucial role in the decoupling process and inversion procedure. A stationary adjoint equation is introduced, enabling the decoupling of the hybrid inverse problem into an inverse source problem for the acoustic wave equation and an inverse medium problem for the electrostatic equation with internal data. The first problem is analogous to that in MAET with voltage measurements, whereas the second does not admit an explicit reconstruction formula. 
We first use a multiplier approach to derive a weighted stability estimate. By combining it with a polynomial lower bound for the local $L^2$-norm of the adjoint current and applying a weighted interpolation argument, we establish an interior H\"older stability estimate without imposing a pointwise nonzero constraint on the current. Under stronger smallness conditions on the conductivity and coil geometry, we further obtain Lipschitz stability in regions of interest where the adjoint current is bounded away from zero.

A notable feature of the stability results is that they do not require any a priori knowledge of the boundary values of the conductivity. This property originates from the structure of the magnetic measurement model and contrasts with many inverse conductivity problems where boundary information is incorporated into the reconstruction process, including EIT \cite{Uhlmann2009} and MAET with voltage measurements \cite{Kunyansky2012,Qiu2023}. This distinction highlights a fundamental difference between the magnetic field and voltage measurement modalities of MAET. 
Moreover, the H\"older stability result reveals a structural advantage of the coil-based measurement modality. Pointwise nonzero constraints on PDE solutions or internal data are a recurring issue in the second reconstruction step of hybrid inverse problems and are often enforced through carefully chosen deterministic or randomized boundary data \cite{Alberti2018,Alberti2022a}. The weighted interpolation argument has been used to derive conditional stability in the presence of zeros \cite{Choulli2019a,Choulli2021,Bonnetier2022}. In the present setting, the required polynomial lower bound follows from the analyticity and intrinsic nontriviality of a coil-determined field. This allows the pointwise nonzero constraint to be removed for the interior H\"older estimate. The stronger region-of-interest Lipschitz estimate still requires a positive lower bound, which motivates our sufficient conditions based on the coil geometry and prior information on the conductivity.

This paper is structured as follows. \Cref{sec:model} introduces the quasi-static model of MAET with magnetic field measurements and relevant preliminaries. \Cref{sec:inv} develops the inversion procedure. \Cref{sec:stab} addresses the key challenge of stabilizing conductivity reconstruction from an internal current density, providing conditional stability results for both bounded domains and half-space. Concluding remarks are presented in \cref{sec:con}.

\section{Mathematical model}\label{sec:model}

Since the time scales of the ultrasonic and electromagnetic processes differ by orders of magnitude \cite{Ammari2015}, we model the forward problem of MAET in a quasi-static manner instead of using the full Maxwell's equations. Specifically, at any given instant during the acoustic wave propagation, the electromagnetic induction in the space can be regarded as nearly instantaneous, and therefore the overall process can be effectively treated using a quasi-static approximation as in \cite{Kunyansky2012}.

Denoting the imaging object by a domain $\Omega$ in $\R^3$, MAET aims to reconstruct the conductivity $\sigma(x)$, which is a spatial function in $\Omega$. In the MAET experiments, the object is placed in a static magnetic field with constant magnetic induction intensity $B_0$. When an acoustic wave is transmitted, it propagates through the object and causes the charged particles inside to vibrate with velocity $V(x,t)$. The arising Lorentz force leads to the flow of charged particles, i.e., the Lorentz current $\sigma(x) V(x,t)\times B_0$. At each time instant $t$, an electric potential $u(x,t)$ is instantaneously induced within the object, with a secondary current $-\sigma(x)\nabla u(x,t)$. Due to charge conservation and the instantaneous redistribution of charges within the imaging object, the total current density $J = \sigma V\times B_0 - \sigma\nabla u$ is divergence-free. Furthermore, since the object is typically placed in water or saline of low conductivity, which can be considered insulating, the total current $J$ does not flow out through the boundary. These physical considerations lead to the following Neumann problem for the potential $u$:
\begin{equation}\label{eq:origin}
    \begin{cases}
        \div(\sigma\nabla u) = \div(\sigma V\times B_0) & \text{in}\ \Omega, \\
        \sigma\partial_\nu u = \sigma V\times B_0\cdot\nu & \text{on}\ \partial\Omega,
    \end{cases}
\end{equation}
where $\nu$ denotes the unit outer normal direction on the boundary $\partial\Omega$. According to the Biot--Savart law of magnetostatics, the total current $J$ generates a magnetic field with flux density
\begin{equation}\label{eq:B}
    B(y,t) = \frac{\mu}{4\pi}\int_\Omega J(x,t)\times\frac{y-x}{|y-x|^3}\,dx
\end{equation}
in the whole space $\R^3$, where the permeability $\mu$ is assumed as the constant of vacuum. 

In the present work, the measuring coil is idealized as a thin conducting wire represented by an oriented simple closed curve $L\subset \R^3\setminus \overline{\Omega}$. Accordingly, the magnetic flux is computed through an oriented surface $D$ satisfying $\partial D=L$, with the orientation chosen consistently with that of $L$ by the right-hand rule. This corresponds to the standard thin-wire approximation, which is appropriate when the coil has one turn, or a small number of turns, and its cross-sectional size is negligible compared with the distance to the imaging region. Under this setting, the measured time-varying induced emf in $L$ is, according to Faraday's law of induction, the negative rate of change of the magnetic flux:
\begin{equation}\label{eq:m_L}
    \begin{aligned}
        m_L(t) &= -\frac{d}{dt}\int_D B(y,t)\cdot n(y)\,dS_y \\
        &= -\frac{\mu}{4\pi}\int_\Omega \partial_t J(x,t)\cdot \int_D \frac{(y-x)\times n(y)}{|y-x|^3}\,dS_y\,dx.
    \end{aligned}
\end{equation}
Here $n$ denotes the unit normal vector to the surface $D$.

The aim of MAET with magnetic field measurements is to reconstruct the conductivity $\sigma$ in $\Omega$, from $m_L(t)$ for several coils $L$, under different background field $B_0$ and ultrasonic excitation $V(x,t)$.

Denoting
\begin{equation}\label{eq:C_L}
    C_L(x) = -\frac{\mu}{4\pi}\int_D \frac{(y-x)\times n(y)}{|y-x|^3}\,dS_y = -\frac{\mu}{4\pi}\curl\int_D \frac{n(y)}{|y-x|}\,dS_y,
\end{equation}
which totally depends on the coil $L$ and is time-independent, the measurement \eqref{eq:m_L} can be written as
\begin{equation}\label{eq:m_L2}
    m_L = \int_\Omega \sigma(\partial_t V\times B_0 - \nabla v)\cdot C_L\,dx,
\end{equation}
where $v = \partial_t u$ solves the Neumann problem
\begin{equation}\label{eq:v}
    \begin{cases}
        \div(\sigma\nabla v) = \div(\sigma\partial_t V\times B_0) & \text{in}\ \Omega, \\
        \sigma\partial_\nu v = \sigma\partial_t V\times B_0\cdot\nu & \text{on}\ \partial\Omega.
    \end{cases}
\end{equation}

The formula \eqref{eq:m_L2} shows the coupling effect between acoustic and electromagnetic processes. Indeed, the measured signal $m_L$ depends on the acoustic field $V$ and the unknown conductivity $\sigma$, while the potential $v$ appearing in the expression is also determined by both of them through the Neumann problem \eqref{eq:v}. The role of the following adjoint formulation is to decouple this effect. More precisely, we shall rewrite $m_L$ as an acoustic measurement whose spatial source is determined by a time-independent adjoint current density. Once this source is recovered from the acoustic inverse source problem, the remaining step is to reconstruct the conductivity from the adjoint current through an overdetermined system. Thus the adjoint problem introduced below provides a significant bridge between the physical measurement model and the decoupling inversion procedure, which will be derived in \cref{sec:inv} and summarized in \cref{fig}.

Now we introduce the following adjoint problem
\begin{equation}\label{eq:w}
    \begin{cases}
        \div(\sigma\nabla w_L) = \div(\sigma C_L) & \text{in}\ \Omega, \\
        \sigma\partial_\nu w_L = \sigma C_L\cdot\nu & \text{on}\ \partial\Omega,
    \end{cases}
\end{equation}
so that the second term in \eqref{eq:m_L2} can be rewritten by integration by parts into
\begin{equation*}
    \int_\Omega \sigma\nabla v\cdot C_L\,dx = \int_\Omega \sigma\nabla v\cdot\nabla w_L\,dx = \int_\Omega \sigma\partial_t V\times B_0\cdot\nabla w_L\,dx.
\end{equation*}
Therefore,
\begin{equation}\label{eq:data}
    m_L = \int_\Omega \sigma\partial_t V\times B_0 \cdot(C_L - \nabla w_L)\,dx = \frac{B_0}{\rho}\cdot\int_\Omega J_L\times \nabla p\,dx,
\end{equation}
where $J_L = \sigma(C_L - \nabla w_L)$ is the total current density of the adjoint problem \eqref{eq:w} (called the adjoint current for short in the following), and $p$ is the acoustic pressure satisfying
\begin{equation*}
    (\partial_t^2 - c^2\Delta)p = 0, \quad \partial_t V = \frac{1}{\rho}\nabla p
\end{equation*}
with sound speed $c$ and mass density $\rho$ regarded as known constants.

\begin{remark}
    By the standard theory of elliptic equations, the potentials $v$ in \eqref{eq:v} and $w_L$ in \eqref{eq:w} are determined only up to additive constants. This grounding ambiguity is irrelevant for the measured data $m_L$ and for the subsequent reconstruction procedure, since only the gradients $\nabla v$ and $\nabla w_L$ enter the formula \eqref{eq:data} for $m_L$, the adjoint current $J_L = \sigma(C_L-\nabla w_L)$, and the decoupled expression below. When needed, one may fix the additive constants by imposing, for instance, zero mean over $\Omega$.
\end{remark}

\begin{remark}
    The thin-wire assumption is made for simplicity of presentation and is not essential to the decoupling argument above. More general coil geometries can be incorporated by superposition. For instance, if the measuring coil is modeled as a cylindrical winding with finite height, it can be regarded as a family of closed curves $L = \{L_s: s\in I\}$, or equivalently a continuous distribution of turns, along the height of the cylinder. The measured signal then takes the form
    \begin{equation*}
        \tilde{m}_L(t) = \int_I \eta(s)m_{L_s}(t)\,ds,
    \end{equation*}
    where $\eta$ denotes the turn density or weight function. By linearity, this additional integration can be absorbed into the coil-dependent vector fields by replacing $C_L$ with
    \begin{equation*}
        \tilde{C}_L(x) = \int_I \eta(s)C_{L_s}(x)\,ds.
    \end{equation*}
    With this replacement, the adjoint problem, the expression for the measured data, and the decoupling procedure remain exactly the same. Therefore, throughout the paper we work with a single closed curve to keep the notation simple.
\end{remark}

Before investigating how to recover the conductivity $\sigma$ from these measurements, we introduce some necessary notations and preliminaries on the adjoint problem \eqref{eq:w}.

\subsection{Notations}

During estimates, we use the symbol $C$ to denote positive constants with their dependence in parentheses.

For $k\in\mathbb{N}$ and a domain $\Omega\subset\R^d$, we use the standard notation $C^k(\overline\Omega)$ to denote the space of $k$-times continuously differentiable functions on $\overline{\Omega}$, with the norm
\begin{equation*} 
    \|u\|_{C^k(\overline\Omega)} = \max_{|\beta|\le k} \sup_{x\in \overline{\Omega}} |\partial^\beta u(x)|. 
\end{equation*}
For $\alpha\in(0,1]$, we denote by $C^{k,\alpha}(\overline{\Omega})$ the H\"older space consisting of $u\in C^k(\overline{\Omega})$ whose $k$-th partial derivatives are H\"older continuous with exponent $\alpha$, that is,
\begin{equation*} 
    [\partial^k u]_{\alpha;\Omega} = \max_{|\beta| = k} \sup_{\substack{x,y\in\overline{\Omega} \cr x\ne y}} \frac{|\partial^\beta u(x) - \partial^\beta u(y)|}{|x-y|^\alpha} < \infty, 
\end{equation*}
with the norm
\begin{equation*} 
    \|u\|_{C^{k,\alpha}(\overline{\Omega})} = \|u\|_{C^k(\overline{\Omega})} + [\partial^k u]_{\alpha;\Omega}. 
\end{equation*}
We say $\partial\Omega\in C^{k,\alpha}$ if each point of $\partial\Omega$ has a neighborhood in which $\partial\Omega$ is the graph of a $C^{k,\alpha}$ function of $d-1$ of the coordinates.
We use $C_c^\infty(\Omega)$ to denote the space of smooth functions with compact supports in $\Omega$.

For $p\in[1,\infty]$, we use the notation $W^{k,p}(\Omega)$ for the $L^p$-based Sobolev space of $k$-th order, i.e.
\begin{equation*} 
    W^{k,p}(\Omega) = \{u\in L^p(\Omega): \partial^\beta u\in L^p(\Omega),\ \forall\,|\beta|\le k\},
\end{equation*}
with the norm
\begin{equation*} 
    \|u\|_{W^{k,p}(\Omega)} = \sum_{|\beta|\le k} \|\partial^\beta u\|_{L^p(\Omega)}.
\end{equation*}
When $p=2$, the space $W^{k,2}(\Omega)$ is usually abbreviated as $H^k(\Omega)$.
In general, we introduce the fractional Sobolev space
\begin{equation*}
    H^s(\R^d) = \{u\in L^2(\R^d): (1+|\xi|^2)^{s/2}\hat{u}(\xi)\in L^2(\R^d)\} 
\end{equation*}
for $s>0$, with the norm
\begin{equation*} 
    \|u\|_{H^s(\R^d)} = \|(1+|\xi|^2)^{s/2}\hat{u}\|_{L^2(\R^d)}, 
\end{equation*}
where $\hat{u}$ is the Fourier transform of $u$.
When the boundary $\partial\Omega\in C^{0,1}$ (usually called Lipschitz), the space $H^s(\partial\Omega)$ is defined via local charts for $s\in(0,1)$. In this paper, we only need the space $H^{1/2}(\partial\Omega)$, which is actually the space of traces of $H^1(\Omega)$ and the trace theorem shows that
\begin{equation}\label{eq:tr}
    \|u\|_{H^{1/2}(\partial\Omega)} \le C(d,\Omega)\|u\|_{H^1(\Omega)}
\end{equation}
Its dual space is defined as $H^{-1/2}(\partial\Omega)$, and the duality pair between them is denoted by $\langle\cdot,\cdot\rangle_{\partial\Omega}$.
For more details about Sobolev spaces, see \cite{Adams2003} and \cite[Chapter 5]{Evans2010}. For simplicity, we do not distinguish the notation between (scalar) functions (usually denoted by lowercase letters) and vector fields (usually denoted by uppercase letters).

For $\Omega\subset\R^3$, we define the curl and divergence operators on the space of $L^2$ vector fields in the distributional sense, i.e.
\begin{align*}
    \langle \curl F, \Phi\rangle &= \int_\Omega F\cdot\curl\Phi\,dx, \quad \forall\, \Phi\in C_c^\infty(\Omega), \\
    \langle \div F, \phi\rangle &= -\int_\Omega F\cdot\nabla\phi\,dx, \quad \forall\, \phi\in C_c^\infty(\Omega).
\end{align*}
They induce the spaces
\begin{align*}
    \Hc{\Omega} &= \{F\in L^2(\Omega): \curl F\in L^2(\Omega)\}, \\
    \Hd{\Omega} &= \{F\in L^2(\Omega): \div F\in L^2(\Omega)\}
\end{align*}
with the norms
\begin{align*}
    \|F\|_{\Hc{\Omega}} &= \|F\|_{L^2(\Omega)} + \|\curl F\|_{L^2(\Omega)}, \\
    \|F\|_{\Hd{\Omega}} &= \|F\|_{L^2(\Omega)} + \|\div F\|_{L^2(\Omega)}.
\end{align*}
Through integration by parts and density arguments, we can define the tangential trace $F\times\nu\in H^{-1/2}(\partial\Omega)$ for $F\in\Hc{\Omega}$ by
\begin{equation*}
    \langle F\times\nu, \Phi\rangle_{\partial\Omega} = \int_\Omega F\cdot\curl\Phi\,dx - \int_\Omega \curl F\cdot\Phi\,dx, \quad \forall\,\Phi\in H^1(\Omega),
\end{equation*}
and the normal trace $F\cdot\nu\in H^{-1/2}(\partial\Omega)$ for $F\in\Hd{\Omega}$ by
\begin{equation*}
    \langle F\cdot\nu, \phi\rangle_{\partial\Omega} = \int_\Omega F\cdot\nabla\phi\,dx + \int_\Omega (\div F)\phi\,dx, \quad \forall\,\phi\in H^1(\Omega).
\end{equation*}
Using these definitions and the trace theorem \eqref{eq:tr}, we know that
\begin{align}
    \|F\times\nu\|_{H^{-1/2}(\partial\Omega)} &\le C(\Omega)\|F\|_{\Hc{\Omega}} \label{eq:tan_tr} \\
    \|F\cdot\nu\|_{H^{-1/2}(\partial\Omega)} &\le C(\Omega)\|F\|_{\Hd{\Omega}} \label{eq:nor_tr}
\end{align}
Further details on these spaces of vector fields can be found in \cite{Amrouche1998,Amrouche2013}.

For notational convenience, we denote the admissible set of electrical conductivities by
\begin{align*}
    \Sigma(\lambda) &= \{\sigma\in L^\infty(\Omega): \lambda\le\sigma\le\lambda^{-1}\ \text{in}\ \Omega\}, \\
    \Sigma(\lambda,\Lambda) &= \{\sigma\in \Sigma(\lambda)\cap W^{1,\infty}(\Omega): \|\sigma\|_{W^{1,\infty}(\Omega)}\le\Lambda\}
\end{align*}
with constants $\lambda\in(0,1)$ and $\Lambda>1$.

\subsection{Preliminaries on the adjoint problem}

\begin{definition}
    We say that $w_L\in H^1(\Omega)$ is a weak solution to the problem \eqref{eq:w}, if for any $\phi\in H^1(\Omega)$,
    \begin{equation}\label{eq:weak}
        \int_\Omega \sigma\nabla w_L\cdot\nabla\phi\,dx = \int_\Omega \sigma C_L\cdot\nabla\phi\,dx.
    \end{equation}
\end{definition}

It is obvious by the definition \eqref{eq:C_L} that $C_L$ is smooth in $\Omega$ once $L\subset\R^3\setminus\overline{\Omega}$. Hence, when considering the regularity of the solution $w_L$, only the regularity of the conductivity $\sigma$ and the domain $\Omega$ matter.

\begin{proposition}\label{prop:forward}
    Let $\Omega$ be a bounded domain in $\R^3$. If $\sigma\in \Sigma(\lambda)$, then
    \begin{enumerate}
        \item the problem \eqref{eq:w} admits a weak solution, and it is unique up to an additive constant and satisfies
        \begin{equation}\label{eq:grad}
            \|\nabla w_L\|_{L^2(\Omega)} \le \lambda^{-2}\|C_L\|_{L^2(\Omega)};
        \end{equation}
        \item the total current $J_L = \sigma(C_L - \nabla w_L)\in L^2(\Omega)$ and
        \begin{equation}
            \|J_L\|_{L^2(\Omega)} \le C(\lambda)\|C_L\|_{L^2(\Omega)};
        \end{equation}
        \item if additionally $\sigma\in W^{1,\infty}(\Omega)$, then $J_L\in\Hc{\Omega}$.
    \end{enumerate}
\end{proposition}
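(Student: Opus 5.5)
Part (1) follows from Lax--Milgram on the quotient space $H^1(\Omega)/\mathbb{R}$, i.e.\ $\{\phi\in H^1(\Omega):\int_\Omega\phi\,dx=0\}$. Since $L\subset\R^3\setminus\overline\Omega$ and $\Omega$ is bounded, $C_L$ is smooth and bounded on $\overline\Omega$, so $C_L\in L^2(\Omega)$. The bilinear form $a(w,\phi)=\int_\Omega\sigma\nabla w\cdot\nabla\phi\,dx$ is bounded by $\lambda^{-1}\|\nabla w\|_{L^2}\|\nabla\phi\|_{L^2}$. On the mean-zero subspace, $\|\nabla\cdot\|_{L^2}$ is an equivalent norm by the Poincar\'e--Wirtinger inequality, and with respect to it $a$ is coercive with constant $\lambda$. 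For Poincar\'e--Wirtinger I would take $\Omega$ to be a bounded Lipschitz (or at least extension) domain, as is implicit in the paper. The right-hand side $\phi\mapsto\int_\Omega\sigma C_L\cdot\nabla\phi\,dx$ is bounded. Lax--Milgram then gives a unique mean-zero solution. Because any test function in $H^1$ differs from a mean-zero one by a constant, and constants have zero gradient, this solution satisfies \eqref{eq:weak} for all $\phi\in H^1(\Omega)$. Uniqueness up to constants follows by subtracting two solutions and testing with their difference $d$: this gives $\int\sigma|\nabla d|^2=0$, so $\nabla d=0$ and $d$ is constant on the connected domain $\Omega$. For \eqref{eq:grad}, test \eqref{eq:weak} with $\phi=w_L$:
\[
\lambda\|\nabla w_L\|_{L^2}^2\le\int_\Omega\sigma|\nabla w_L|^2\,dx=\int_\Omega\sigma C_L\cdot\nabla w_L\,dx\le\lambda^{-1}\|C_L\|_{L^2}\|\nabla w_L\|_{L^2},
\]
which yields the bound $\lambda^{-2}\|C_L\|_{L^2}$.

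Part (2) is immediate from (1). We have $\|J_L\|_{L^2}\le\lambda^{-1}(\|C_L\|_{L^2}+\|\nabla w_L\|_{L^2})\le\lambda^{-1}(1+\lambda^{-2})\|C_L\|_{L^2}$.

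For part (3), compute $\curl J_L$ in the distributional sense. Formally, $\curl(\sigma F)=\nabla\sigma\times F+\sigma\curl F$ with $F=C_L-\nabla w_L$. Since $\curl\nabla w_L=0$ distributionally, this gives $\curl J_L=\nabla\sigma\times(C_L-\nabla w_L)+\sigma\curl C_L$. The main point to justify is this product rule for $\sigma\in W^{1,\infty}$ and $F\in L^2$ with $\curl F\in L^2$. I would first verify it for the gradient part. For $\Phi\in C_c^\infty(\Omega)$, write $\int\sigma\nabla w_L\cdot\curl\Phi\,dx$. Approximate $w_L$ by smooth $w_n\to w_L$ in $H^1_{\rm loc}$ on a neighborhood of $\operatorname{supp}\Phi$, which is possible by mollification. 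For smooth $w_n$ the identity $\int\sigma\nabla w_n\cdot\curl\Phi=\int(\nabla\sigma\times\nabla w_n)\cdot\Phi$ holds. This follows from $\sigma\curl\Phi=\curl(\sigma\Phi)-\nabla\sigma\times\Phi$ with $\sigma\Phi\in W^{1,\infty}_0$, together with $\int\nabla w_n\cdot\curl(\sigma\Phi)=0$. Passing to the limit gives the identity for $w_L$. The $C_L$ term is handled the same way, since $C_L$ is smooth on $\overline\Omega$. All resulting terms lie in $L^2$: $\nabla\sigma\in L^\infty$, $C_L-\nabla w_L\in L^2$, and $\curl C_L$ is bounded on $\overline\Omega$. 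Hence $J_L\in\Hc{\Omega}$, with
\[
\|\curl J_L\|_{L^2(\Omega)}\le\|\nabla\sigma\|_{L^\infty}\bigl(\|C_L\|_{L^2}+\|\nabla w_L\|_{L^2}\bigr)+\lambda^{-1}\|\curl C_L\|_{L^2}.
\]
The only mildly delicate step is this justification of the weak product rule. The rest is routine.
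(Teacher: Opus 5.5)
Your proposal is correct and follows the paper's proof essentially step for step. You use Lax--Milgram with the Poincar\'e inequality for existence and uniqueness up to constants, test with $\phi=w_L$ to get \eqref{eq:grad}, apply the triangle inequality for $J_L$, and use the product rule $\curl J_L=\nabla\sigma\times(C_L-\nabla w_L)+\sigma\curl C_L$ for part (3). Beyond the paper, you justify the weak product rule for $\sigma\in W^{1,\infty}$, which the paper calls a direct computation, and you note that the Poincar\'e--Wirtinger inequality needs some boundary regularity, which the paper tacitly assumes in \eqref{eq:Poincare}.
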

\begin{proof}
    The existence and uniqueness up to an additive constant is a standard result by the Poincar\'e inequality
    \begin{equation}\label{eq:Poincare}
        \left\|u - \fint_\Omega u\,dx\right\|_{L^2(\Omega)} \le C(\Omega)\|\nabla u\|_{L^2(\Omega)}, \quad \forall\,u\in H^1(\Omega)
    \end{equation}
    and the Lax--Milgram theorem. Taking $\phi = w_L$ in \eqref{eq:weak} leads to
    \begin{equation*}
        \lambda\|\nabla w_L\|_{L^2(\Omega)}^2 \le \lambda^{-1}\|C_L\|_{L^2(\Omega)}\|\nabla w_L\|_{L^2(\Omega)},
    \end{equation*}
    so the gradient estimate \eqref{eq:grad} holds. Therefore, $J_L = \sigma(C_L - \nabla w_L)\in L^2(\Omega)$ and
    \begin{equation*}
        \|J_L\|_{L^2(\Omega)} \le \lambda^{-1}(\|C_L\|_{L^2(\Omega)} + \|\nabla w_L\|_{L^2(\Omega)}) \le C(\lambda)\|C_L\|_{L^2(\Omega)}.
    \end{equation*}
    Finally, if $\sigma\in W^{1,\infty}(\Omega)$, then by direct computation we see that
    \begin{equation*}
        \curl J_L = \nabla\sigma\times(C_L - \nabla w_L) + \sigma\curl C_L \in L^2(\Omega),
    \end{equation*}
    yielding that $J_L\in\Hc{\Omega}$.
\end{proof}

We also need the following lemma \cite[Theorem 3.17]{Amrouche1998}, showing that a solenoidal vector field $F$ which is tangential on the boundary admits a normal vector potential $A$.
\begin{assumption}\label{as}
    Let $\Omega$ be a bounded, simply connected domain in $\R^3$ with a connected boundary of class $C^{0,1}$.
\end{assumption}
\begin{lemma}\label{lem:vec_pot}
    Let $\Omega$ satisfy \cref{as}. Then for any vector field $F\in\Hd{\Omega}$ satisfying
    \begin{equation}\label{eq:F}
        \begin{cases}
            \div F = 0 & \text{in}\ \Omega, \\ 
            F\cdot\nu = 0 & \text{on}\ \partial\Omega,
        \end{cases}
    \end{equation}
    there exists a vector field $A\in\Hc{\Omega}$ satisfying
    \begin{equation}\label{eq:A}
        \begin{cases}
            \div A = 0 & \text{in}\ \Omega, \\
            A\times\nu = 0 & \text{on}\ \partial\Omega,
        \end{cases}
    \end{equation}
    such that $F = \curl A$ in $\Omega$, and
    \begin{equation}
        \|A\|_{L^2(\Omega)} \le C(\Omega)\|F\|_{L^2(\Omega)}.
    \end{equation}
\end{lemma}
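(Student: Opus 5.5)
This is the standard result \cite[Theorem 3.17]{Amrouche1998}. To prove it directly, I would construct $A$ as a minimizer of a coercive variational problem on the space
\begin{equation*}
    X_N = \{A\in\Hc{\Omega}\cap\Hd{\Omega}: A\times\nu = 0 \ \text{on}\ \partial\Omega\}.
\end{equation*}
The first step is the key functional-analytic input. Because $\partial\Omega$ is connected, the only field $A\in X_N$ with $\curl A=0$ and $\div A=0$ is $A=0$. Indeed, $\curl A=0$ on the simply connected $\Omega$ gives $A=\nabla q$. The condition $A\times\nu=0$ then says $q$ is constant on the connected boundary. Finally, $\div A=0$ makes $q$ harmonic, so $q$ is constant and $A=0$. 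Combining this with the compact embedding of $X_N$ into $L^2(\Omega)$ for Lipschitz domains (Weber's theorem) gives, by a contradiction argument,
\begin{equation*}
    \|A\|_{L^2(\Omega)} \le C(\Omega)\left(\|\curl A\|_{L^2(\Omega)} + \|\div A\|_{L^2(\Omega)}\right), \qquad \forall\, A\in X_N.
\end{equation*}

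Second, I would solve the following problem: find $A\in X_N$ such that for all $\Phi\in X_N$,
\begin{equation*}
    \int_\Omega \curl A\cdot\curl\Phi\,dx + \int_\Omega \div A\,\div\Phi\,dx = \int_\Omega F\cdot\curl\Phi\,dx.
\end{equation*}
By the inequality above, the left-hand side is coercive on $X_N$, so Lax--Milgram yields a unique $A$. Testing with $\Phi=A$ gives $\|\curl A\|+\|\div A\|\le C\|F\|$, and the $L^2$ bound on $A$ follows.

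Third, I would show that $\div A=0$ and $\curl A=F$.
\begin{itemize}
\item For the divergence, take $\Phi=\nabla\chi$ where $\chi\in H^1_0(\Omega)$ solves $\Delta\chi=\div A$. Then $\Phi\times\nu=0$ and $\curl\Phi=0$, so the identity reduces to $\|\div A\|^2=0$.
\item For the curl, set $G=\curl A-F$. Then $\div G=0$ in $\Omega$, and $G\cdot\nu=0$ on $\partial\Omega$, using $(\curl A)\cdot\nu = \mathrm{div}_{\partial\Omega}(A\times\nu)=0$ together with \eqref{eq:F}. The variational identity now says $G$ is orthogonal to $\curl X_N$.
\end{itemize}
The main obstacle is concluding that $G=0$. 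I would argue that $\curl X_N$ equals the set of all divergence-free fields with vanishing normal trace. This holds because the harmonic Neumann fields (divergence-free, curl-free, tangential fields) vanish when $\Omega$ is simply connected, by the same gradient argument as in the first step with a Neumann condition in place of the Dirichlet one. Then $G\perp G$, so $G=0$. Establishing this Hodge decomposition for Lipschitz boundaries, which rests on Weber compactness, is where the real work lies, and I would cite \cite{Amrouche1998} for it.
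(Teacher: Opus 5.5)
Your proposal is correct, and it supplies an argument where the paper supplies none: the paper simply invokes \cite[Theorem 3.17]{Amrouche1998}. That theorem is formulated for Lipschitz domains of general topology, with flux conditions across cuts and normalizations on boundary components, and it reduces to the stated lemma under \cref{as}. Your Lax--Milgram construction on $X_N$ shows where each hypothesis is used:
\begin{itemize}
\item connectedness of $\partial\Omega$ is what makes the harmonic Dirichlet fields vanish, which gives coercivity on all of $X_N$;
\item simple connectedness removes the harmonic Neumann fields;
\item Lipschitz regularity enters only through Weber's compactness theorem and standard facts on gradients and traces.
\end{itemize}
The citation buys brevity and the general-topology version.

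One simplification: the step you flag as the main obstacle needs no Hodge decomposition. Invoking $\curl X_N=\{v\in L^2(\Omega):\div v=0,\ v\cdot\nu=0\}$ is in fact awkward, because that identity already contains the lemma up to the gauge fix $A\mapsto A-\nabla\chi$, which would make your Steps 2--3 redundant. Instead, note that $C_c^\infty(\Omega)^3\subset X_N$. Hence the orthogonality $(G,\curl\Phi)_{L^2(\Omega)}=0$ for all $\Phi\in X_N$ already gives $\curl G=0$ in the sense of distributions. You have also shown $\div G=0$ and $G\cdot\nu=0$. Write $G=\nabla q$ with $q\in H^1(\Omega)$, using simple connectedness, and apply the normal-trace identity:
\begin{equation*}
\int_\Omega|G|^2\,dx=\langle G\cdot\nu,q\rangle_{\partial\Omega}-\int_\Omega(\div G)\,q\,dx=0 .
\end{equation*}
This gives $G=0$ directly.
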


Since the equation \eqref{eq:w} means that the total current $J_L$ satisfies the equation \eqref{eq:F}, we have
\begin{corollary}\label{cor:J_A}
    Under \cref{as}, if $\sigma\in\Sigma(\lambda)$, then the total current $J_L$ of the problem \eqref{eq:w} admits a vector potential $A\in\Hc{\Omega}$, satisfying \eqref{eq:A} and
    \begin{equation}\label{eq:est_A}
        \|A\|_{L^2(\Omega)} \le C(\lambda,\Omega)\|C_L\|_{L^2(\Omega)}.
    \end{equation}
\end{corollary}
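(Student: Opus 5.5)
The plan is to verify the two hypotheses of \cref{lem:vec_pot} for $F = J_L$, apply the lemma, and then combine its $L^2$ bound with \cref{prop:forward}.

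\emph{Membership in $\Hd{\Omega}$.} By \cref{prop:forward}, $J_L = \sigma(C_L-\nabla w_L)\in L^2(\Omega)$, with $\|J_L\|_{L^2(\Omega)}\le C(\lambda)\|C_L\|_{L^2(\Omega)}$. For the divergence, take $\phi\in C_c^\infty(\Omega)\subset H^1(\Omega)$ in the weak formulation \eqref{eq:weak}. This gives
\begin{equation*}
\int_\Omega J_L\cdot\nabla\phi\,dx = \int_\Omega \sigma(C_L-\nabla w_L)\cdot\nabla\phi\,dx = 0 .
\end{equation*}
By the distributional definition of the divergence, this means $\div J_L=0$ in $\Omega$. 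In particular $\div J_L\in L^2(\Omega)$, so $J_L\in\Hd{\Omega}$.

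\emph{Vanishing normal trace.} Since $J_L\in\Hd{\Omega}$, its normal trace is defined through the Green formula stated in the notation section. For any $\phi\in H^1(\Omega)$,
\begin{equation*}
\langle J_L\cdot\nu,\phi\rangle_{\partial\Omega}=\int_\Omega J_L\cdot\nabla\phi\,dx+\int_\Omega(\div J_L)\phi\,dx=0 .
\end{equation*}
The first integral vanishes by \eqref{eq:weak}, now used with arbitrary test functions in $H^1(\Omega)$ rather than only compactly supported ones. The second vanishes because $\div J_L=0$. Every element of $H^{1/2}(\partial\Omega)$ is the trace of some $H^1(\Omega)$ function, so $J_L\cdot\nu=0$ in $H^{-1/2}(\partial\Omega)$. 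Hence $J_L$ satisfies \eqref{eq:F}.

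\emph{Conclusion.} Under \cref{as}, \cref{lem:vec_pot} yields $A\in\Hc{\Omega}$ satisfying \eqref{eq:A} with $J_L=\curl A$. It also gives
\begin{equation*}
\|A\|_{L^2(\Omega)}\le C(\Omega)\|J_L\|_{L^2(\Omega)}\le C(\Omega)C(\lambda)\|C_L\|_{L^2(\Omega)},
\end{equation*}
which is \eqref{eq:est_A}.

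The only step needing care is the normal trace. The boundary condition in \eqref{eq:w} has to be read through the weak formulation, using test functions that do not vanish on $\partial\Omega$, rather than pointwise. Once that is done, the rest is a direct application of earlier results.
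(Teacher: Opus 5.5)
Your proposal is correct and follows the paper's route: the paper observes that \eqref{eq:w} means $J_L$ satisfies \eqref{eq:F}, applies \cref{lem:vec_pot}, and combines its bound with $\|J_L\|_{L^2(\Omega)}\le C(\lambda)\|C_L\|_{L^2(\Omega)}$ from \cref{prop:forward}. Your derivation of $\div J_L=0$ and $J_L\cdot\nu=0$ from the weak formulation \eqref{eq:weak} makes explicit the step the paper leaves implicit.
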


\section{Inversion procedure}\label{sec:inv}

\subsection{Recovering the total effect of an acoustic source}

In the first step of reconstruction, we consider recovering some information inside $\Omega$ from the measured data \eqref{eq:data}. To this end, we proceed as in MAET with voltage measurements \cite{Kunyansky2012} to employ the spherical wave
\begin{equation*}
    p(x,t;y) = \frac{\delta(|x-y|-ct)}{4\pi c^2 t}
\end{equation*}
transmitted at a point $y$ outside the object, which is generated by the initial state
\begin{equation*}
    p(x,0;y) = 0, \quad \partial_t p(x,0;y) = \delta(x-y),
\end{equation*}
so that we can express the measured data as a convolution in space
\begin{align*}
    m_L(t;y) &= \frac{B_0}{\rho}\cdot \int_\Omega J_L(x)\times\nabla_x p(x,t;y) \,dx \\
    &= \left<\frac{B_0}{\rho}\cdot[J_L\times\nu\delta_{\partial\Omega} + (\curl J_L)\chi_\Omega], p(\cdot,t;y)\right>.
\end{align*}
Here the term $J_L\times\nu\delta_{\partial\Omega}$ should be understood in the sense of distributions. Its appearance comes from the integration by parts formula for the curl when the current $J_L$, originally defined only in $\Omega$, is regarded as a vector field in the whole space via zero extension. More precisely, for any test function $\phi\in C^\infty(\R^3)$, we have
\begin{equation*}
    \int_\Omega J_L\times\nabla\phi\,dx = \int_\Omega \phi \curl J_L\,dx + \int_{\partial\Omega} \phi J_L\times\nu\,dS.
\end{equation*}
Thus, in the distributional sense,
\begin{equation*}
    \curl(J_L\chi_\Omega) = (\curl J_L)\chi_\Omega + J_L\times\nu\delta_{\partial\Omega},
\end{equation*}
containing both the interior curl of the adjoint current and its tangential trace on the boundary.

Let
\begin{equation*}
    f = \frac{B_0}{\rho}\cdot[J_L\times\nu\delta_{\partial\Omega} + (\curl J_L)\chi_\Omega].
\end{equation*}
Then the measurement
\begin{equation*}
    m_L(t;y) = \frac{t}{4\pi} \int_{|z|=1} f(y+ctz)\,dS_z
\end{equation*}
as a function of $t>0$ and $y\in\R^3$, satisfies an acoustic wave equation
\begin{equation}\label{eq:meas_acoustic}
    \begin{cases}
        (\partial_t^2 - c^2\Delta_y)m_L(t;y) = 0, & y\in\R^3,\ t>0, \\
        m_L(0;y) = 0, & y\in\R^3, \\
        \partial_t m_L(0;y) = f(y), & y\in\R^3.
    \end{cases}
\end{equation}
So far we have decoupled the original coupled-physics inverse problem into two parts: an acoustic part and an electromagnetic part. The acoustic part involves an inverse source problem of the acoustic wave equation, to recover the initial state $f(y)$ (which can be regarded as an instantaneous source at the initial time), which is a spatial distribution supported on $\overline{\Omega}$. It also acts as the first inversion step of many other coupled-physics imaging techniques, such as thermo-acoustic tomography (TAT) \cite{Kunyansky2008,Hristova2008,Stefanov2009,Bal2011a}, photo-acoustic tomography (PAT) \cite{Kuchment2011a,Do2018}, magneto-acoustic tomography with magnetic induction (MAT-MI) \cite{Ammari2015,Qiu2015}, as well as MAET with voltage measurements \cite{Kunyansky2012}. 

When the sound speed is a known constant, as we assume, this inverse source problem is equivalent to the inversion of the spherical mean Radon transform, and has been intensively studied in the literature. Andersson \cite{Andersson1988} shows that the source $f$ can be inverted from measurements on a plane outside $\overline{\Omega}$ with a Lipschitz stability, using an explicit formula in terms of Fourier transform. An iterative formula is derived in \cite{Aramyan2024}. More studies consider measurements on a closed surface $\Sigma$ surrounding $\overline{\Omega}$. Explicit inversion formulas are given in \cite{Finch2004,Kunyansky2007a,Kunyansky2007} for spherical $\Sigma$, and in \cite{Haltmeier2014,Agranovsky2023} for more general convex $\Sigma$. A theoretically exact reconstruction from time-reduced measurements on a finite open surface is also developed in \cite{Do2018}. By the way, when the sound speed $c = c(x)$ varies spatially, the inverse source problem is also studied in \cite{Stefanov2009,Kuchment2011a}. Furthermore, if $c(x)$ is unknown, the joint recovery of $c(x)$ and the initial source $f$ is investigated in \cite{Liu2015a,Kian2025a,Kian2025}, where some uniqueness and stability results are derived for specifically structured $c(x)$.

In summary, with sufficiently many measurements generated by external point sources, we are able to determine the spatial distribution $f$ in effective ways mentioned above. Assuming that the mass density $\rho$ of the object is known, the vector-valued distribution $J_L\times\nu\delta_{\partial\Omega} + (\curl J_L)\chi_\Omega$ can be determined by setting the background magnetic field $B_0$ in three axial directions (actually, two directions are sufficient for the determination \cite{Kunyansky2012}). After that, the recovered distribution will be used in the electromagnetic part to reconstruct the conductivity $\sigma$ encoded in the vector field $J_L = \sigma(C_L - \nabla w_L)$.

\subsection{Recovering the adjoint current}

The information recovered in the first step includes the internal curl and the boundary tangential trace of the adjoint current field $J_L = \sigma(C_L - \nabla w_L)$. Since it satisfies the equation \eqref{eq:F}, we have the following explicit formula for recovering $J_L$ itself:
\begin{equation}\label{eq:vec_pot}
    J_L(x) = \curl \int_\Omega \nabla_x\times \frac{J_L(y)}{4\pi|x-y|}\,dy
    = \curl \left<J_L\times\nu\delta_{\partial\Omega} + (\curl J_L)\chi_\Omega, \frac{1}{4\pi|x-\cdot|}\right>.
\end{equation}
This formula is given in \cite{Stewart2011}, but for completeness we provide its proof here.
\begin{proof}[Proof of \eqref{eq:vec_pot}]
    Since $\frac{1}{4\pi|x|}$ is the fundamental solution of $-\Delta$ in $\R^3$, we have
    \begin{align*}
        J_L(x) &= \int_\Omega J_L(y)\delta(x-y)\,dy = -\int_\Omega \Delta_x \frac{J_L(y)}{4\pi|x-y|}\,dy \\
        &= \curl \int_\Omega \nabla_x\times \frac{J_L(y)}{4\pi|x-y|}\,dy - \nabla\int_\Omega \nabla_x\cdot \frac{J_L(y)}{4\pi|x-y|}\,dy.
    \end{align*}
    The second term vanishes since $J_L$ satisfies the equation \eqref{eq:F}, so that
    \begin{align*}
        \int_\Omega \nabla_x\cdot \frac{J_L(y)}{4\pi|x-y|}\,dy &= \int_\Omega J_L(y)\cdot\nabla_x \frac{1}{4\pi|x-y|}\,dy \\
        &= -\int_\Omega J_L(y)\cdot\nabla_y \frac{1}{4\pi|x-y|}\,dy \\
        &= -\int_{\partial\Omega} \frac{J_L(y)\cdot\nu(y)}{4\pi|x-y|}\,dS_y + \int_\Omega \frac{\div J_L(y)}{4\pi|x-y|}\,dy = 0.
    \end{align*}
    Hence, \eqref{eq:vec_pot} is proved.
\end{proof}

\subsection{Recovering the conductivity}

Finally, we consider recovering the conductivity $\sigma$ from the adjoint current $J_L = \sigma(C_L - \nabla w_L)$ in $\Omega$. To this end, we eliminate the gradient to derive
\begin{equation}\label{eq:curl}
    \curl\frac{J_L}{\sigma} = \curl C_L = -\frac{\mu}{4\pi}\left[\nabla\left(\div\int_D \frac{n(y)}{|y-x|}\,dS_y\right) - \Delta\int_D \frac{n(y)}{|y-x|}\,dS_y\right]
\end{equation}
for $x\in\Omega$. Since $\frac{1}{4\pi|x|}$ is the fundamental solution of $-\Delta$ in $\R^3$, we know that
\begin{equation*}
    -\Delta\int_D \frac{n(y)}{4\pi|y-x|}\,dS_y = 0
\end{equation*}
provided that $D\subset\R^3\setminus\overline{\Omega}$. Thus the equation \eqref{eq:curl} is equivalent to
\begin{equation}\label{eq:r}
    \curl(r J_L) = G_L \ \text{in}\ \Omega
\end{equation}
with $r\coloneqq 1/\sigma$ representing the electrical resistivity and
\begin{equation}\label{eq:GL}
    \begin{aligned}
        G_L(x) \coloneqq{}& -\frac{\mu}{4\pi}\nabla\left(\div\int_D \frac{n(y)}{|y-x|}\,dS_y\right) \\
        ={}& -\frac{\mu}{4\pi}\int_D \left(3\frac{(y-x)\cdot n(y)}{|y-x|^5}(y-x) - \frac{n(y)}{|y-x|^3}\right)\,dS_y.
    \end{aligned}
\end{equation}
At this stage, the unknown is the resistivity $r(x)$ which is a scalar function in $\Omega$. The vector field $J_L$ is known from the preceding reconstruction step, while $G_L$ is completely determined by the geometry of the measuring coil $L$. Therefore, for a fixed coil $L$, equation \eqref{eq:r} constitutes an overdetermined system of linear partial differential equations for the scalar unknown $r$. A natural way to reconstruct $r$ from the measured current $J_L$ is therefore to formulate the least-squares problem
\begin{equation}\label{eq:LS}
    \min_r \frac12 \|\curl(r J_L) - G_L\|_{L^2(\Omega)}^2,
\end{equation}
which can be solved using standard gradient-based iterative methods.

\begin{remark}
    In practice, the above least-squares formulation \eqref{eq:LS} should be interpreted together with the magnitude of the reconstructed adjoint current $J_L$. If $J_L$ vanishes, or is very small, in part of the domain, then the map $r\mapsto \curl(rJ_L)$ is degenerate or severely ill-conditioned there, and the resistivity $r$ cannot be expected to be stably reconstructed from this coil. Thus, after reconstructing $J_L$, one may choose a threshold $\tau>0$ and define a reconstruction region
    \begin{equation*}
        \Omega_\tau\coloneqq \{x\in\Omega: |J_L(x)|\ge \tau\}.
    \end{equation*}
    The data-fidelity integral in the least-squares functional may then be restricted to $\Omega_\tau$, and the complement $\Omega\setminus\Omega_\tau$ where $|J_L|<\tau$ is regarded as a low-sensitivity or poorly observable region for the fixed coil. Different coils may generate adjoint currents with different low-current regions, so multiple coils can be used to enlarge the reconstruction region.
\end{remark}

\begin{remark}
    The formulation \eqref{eq:LS} uses the data associated with a single measuring coil $L$, and the stability results below are also stated in this single-coil setting. If measurements from several coils $L_1,\cdots,L_N$ are available, one may instead combine the corresponding reconstructed currents and known coil fields by considering the joint least-squares functional
    \begin{equation*}
        \min_r \frac12\sum_{j=1}^N \|\nabla\times(rJ_{L_j}) - G_{L_j}\|^2_{L^2(\Omega)}.
    \end{equation*}
    Such a multi-coil formulation may improve conditioning and is natural for numerical implementation, while the theoretical analysis in the present work is restricted to the single-coil setting.
\end{remark}

In conclusion, we have designed an inversion procedure to reconstruct the conductivity distribution, which is shown in \cref{fig}.
\begin{figure}[htbp]
    \centering
    \begin{tikzpicture}[>=stealth]
        \matrix[column sep=8mm, row sep=6mm]{
            \node[draw, rounded corners] (data) {$\displaystyle m_L(t) = \frac{B_0}{\rho}\cdot \int_\Omega J_L(x)\times\nabla p(x,t)\,dx$}; & 
            \node[draw, rounded corners] (conduct) {$\displaystyle \phantom{\Big|}\sigma(x)\phantom{\Big|}$}; \\
            \node[align=center] (invI) {excitation by external point source \\ inverse source problem for acoustic wave equation}; &
            \node[] (invIII) {1st-order overdetermined linear system}; \\
            \node[draw] (curl) {$J_L\times\nu\delta_{\partial\Omega} + (\nabla\times J_L)\chi_\Omega$}; &
            \node[draw] (current) {$J_L = \sigma(C_L - \nabla w_L)$}; \\
        };

        \draw[-] (data) -- (invI);
        \draw[->] (invI) -- (curl);
        \draw[->] (curl) -- node[below] {explicit formula} (current);
        \draw[-] (current) -- (invIII);
        \draw[->] (invIII) -- (conduct);
    \end{tikzpicture}
    \caption{Diagram of inversion procedure}
    \label{fig}
\end{figure}
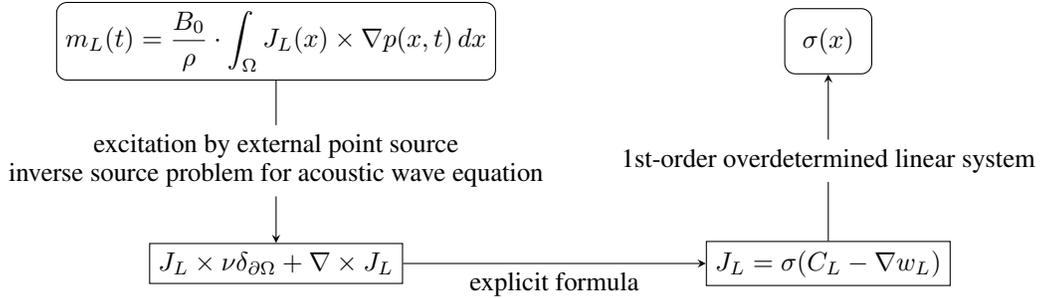

\section{Stability of recovering the conductivity from an internal current density}\label{sec:stab}

Within the inversion framework described above, the acoustic inverse source problem has been extensively investigated in the literature, and a variety of efficient reconstruction methods are available. Consequently, the remaining task is to analyze the stability of recovering the conductivity $\sigma$ from the measured current density field $J_L$.

The well-posedness of the forward problem has been established in \cref{prop:forward}. More precisely, for any fixed coil $L$ located outside the domain $\Omega$, the forward operator
\begin{equation}
    {\cal F}_L \colon \Sigma(\lambda,\Lambda) \to \Hc{\Omega},
    \quad
    \sigma \mapsto J_L = \sigma (C_L - \nabla w_L)
\end{equation}
is well-defined. In what follows, we fix a coil $L$ and investigate the stability of reconstructing $\sigma$ from the data ${\cal F}_L(\sigma)$.

\begin{proposition}\label{prop:weighted}
    Let $\Omega$ satisfy \cref{as} and $\sigma_1, \sigma_2\in\Sigma(\lambda,\Lambda)$. If $J_1 = {\cal F}_L(\sigma_1)$ and $J_2 = {\cal F}_L(\sigma_2)$, then
    \begin{equation}\label{eq:weighted}
        \int_\Omega |\sigma_1 - \sigma_2||J_1|^2\,dx \le C(\lambda,\Lambda,\Omega)\|C_L\|_{L^2(\Omega)}\|J_1 - J_2\|_{\Hc{\Omega}}.
    \end{equation}
\end{proposition}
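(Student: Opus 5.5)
The plan is to compare the two currents through the gradient of the potential difference and to use only the weak form of the adjoint problem \eqref{eq:w}. Set $r_i = 1/\sigma_i$, $w = w_1 - w_2$ (with $w_i$ the adjoint potentials), and $\delta J = J_1 - J_2$. By definition $r_iJ_i = C_L - \nabla w_i$, so the coil field cancels in the difference:
\begin{equation*}
    -\nabla w = r_1J_1 - r_2J_2 = (r_1 - r_2)J_1 + r_2\,\delta J .
\end{equation*}
The first term on the right carries the quantity to be estimated, the second is controlled by the data, and $\nabla w$ is to be eliminated.

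The key step is that $\nabla w$ is orthogonal in $L^2(\Omega)$ to $J_1$. Indeed, the weak formulation \eqref{eq:weak} for $\sigma_1$ states exactly that $\int_\Omega J_1\cdot\nabla\phi\,dx = 0$ for all $\phi\in H^1(\Omega)$; this is the weak form of \eqref{eq:F}. Taking $\phi = w$ and substituting $(r_1-r_2)J_1 = -\nabla w - r_2\,\delta J$ gives
\begin{equation*}
    \|\nabla w\|_{L^2(\Omega)}^2 = -\int_\Omega \nabla w\cdot\bigl[(r_1-r_2)J_1 + r_2\,\delta J\bigr]\,dx = -\int_\Omega r_2\,\nabla w\cdot\delta J\,dx ,
\end{equation*}
since the term containing $J_1$ drops out by orthogonality. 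Because $r_2\le\lambda^{-1}$, this yields $\|\nabla w\|_{L^2(\Omega)}\le\lambda^{-1}\|\delta J\|_{L^2(\Omega)}$. Consequently
\begin{equation*}
    \int_\Omega (r_1-r_2)^2|J_1|^2\,dx = \|\nabla w + r_2\,\delta J\|_{L^2(\Omega)}^2 \le C(\lambda)\|\delta J\|_{L^2(\Omega)}^2 .
\end{equation*}

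To obtain the stated weighted $L^1$-type quantity, I would split $|r_1-r_2||J_1|^2 = \bigl(|r_1-r_2||J_1|\bigr)\cdot|J_1|$ and apply Cauchy--Schwarz:
\begin{equation*}
    \int_\Omega |r_1-r_2||J_1|^2\,dx \le \Bigl(\int_\Omega (r_1-r_2)^2|J_1|^2\,dx\Bigr)^{1/2}\|J_1\|_{L^2(\Omega)} .
\end{equation*}
Then I would invoke \cref{prop:forward} for $\|J_1\|_{L^2(\Omega)}\le C(\lambda)\|C_L\|_{L^2(\Omega)}$. Finally, I would convert resistivity back to conductivity using the pointwise bound $|\sigma_1-\sigma_2| = \sigma_1\sigma_2|r_1-r_2|\le\lambda^{-2}|r_1-r_2|$, and note that $\|\delta J\|_{L^2(\Omega)}\le\|\delta J\|_{\Hc{\Omega}}$. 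Together these give \eqref{eq:weighted}.

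The step I expect to need the most care is justifying the orthogonality $\int_\Omega J_1\cdot\nabla w\,dx=0$ rigorously: one must check that $w_1-w_2\in H^1(\Omega)$ is an admissible test function in \eqref{eq:weak}, which holds since the weak formulation is posed over all of $H^1(\Omega)$. If a curl-based multiplier argument were preferred instead, one could write $J_1=\curl A$ via \cref{cor:J_A} and integrate $\curl(r_2\,\delta J)$ against $A$, which uses the $\Hc{\Omega}$ norm and $\Lambda$. That route, however, only controls the signed integral $\int_\Omega (r_1-r_2)|J_1|^2\,dx$, so I would rely on the gradient argument above to handle the absolute value.
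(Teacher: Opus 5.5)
Your argument breaks at the cancellation step. The weak form of \eqref{eq:w} gives $\int_\Omega J_1\cdot\nabla\phi\,dx=0$ only against gradients. The term you drop is $\int_\Omega (r_1-r_2)\,J_1\cdot\nabla w\,dx$, and $(r_1-r_2)\nabla w$ is not a gradient unless $r_1-r_2$ is constant. The identity $-\nabla w=(r_1-r_2)J_1+r_2\,\delta J$ shows only that the divergence-free part of $(r_1-r_2)J_1$ is controlled by $\delta J$; its gradient part is not controlled at all. This cannot be patched, because the bound you reach, $\int_\Omega (r_1-r_2)^2|J_1|^2\,dx\le C(\lambda)\|\delta J\|_{L^2(\Omega)}^2$, is false, and so is the weighted $L^1$ bound with $\|\delta J\|_{L^2(\Omega)}$ on the right. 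To see this, take $\Omega$ and $\sigma_2$ rotationally symmetric and $L$ a coaxial circle. Then $w_2$ is constant and $J_2=\sigma_2C_L$, with $C_L=g\nabla\varphi$ azimuthal off the axis ($\varphi$ the azimuthal angle). Put $\sigma_1=\sigma_2+k^{-1}\chi\cos(k\varphi)$ with $k$ an integer and $\chi$ a cutoff supported off the axis, so that $\|\sigma_1\|_{W^{1,\infty}(\Omega)}$ stays bounded. Since $\delta J=F-\sigma_1\nabla w$ with $F=(\sigma_1-\sigma_2)r_2J_2$, $\delta J$ is the $r_1$-weighted orthogonal projection of $F$ onto $\{G:\int_\Omega G\cdot\nabla\phi\,dx=0\ \forall\,\phi\in H^1(\Omega)\}$, whose $r_1$-complement is $\sigma_1\nabla H^1(\Omega)$. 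Moreover $F=\sigma_1\nabla\bigl(k^{-2}\chi\sin(k\varphi)\,r_2 g\bigr)+O(k^{-2})$. Hence $\|\delta J\|_{L^2(\Omega)}=O(k^{-2})$, whereas $\int_\Omega|\sigma_1-\sigma_2||J_1|^2\,dx\sim k^{-1}$. Only $\|\curl\delta J\|_{L^2(\Omega)}$ stays of order one, which is why the $\Hc{\Omega}$ norm and $\Lambda$ must enter.

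The missing ingredient is the curl equation \eqref{eq:r}, and your reason for setting that route aside is mistaken: it does control the absolute value. Eliminating $\nabla w_i$ gives $\curl(r_1J_1)=G_L=\curl(r_2J_2)$, so $\curl[(r_1-r_2)J_1]=\curl[r_2(J_2-J_1)]$. Since $r_1-r_2\in W^{1,\infty}(\Omega)$ and $\nabla|f|=\sgn(f)\nabla f$ a.e., also $\curl(|r_1-r_2|J_1)=\sgn(r_1-r_2)\curl[r_2(J_2-J_1)]$. Test this against the potential $A$ of \cref{cor:J_A}, with $J_1=\curl A$ and $A\times\nu=0$, so no boundary term appears. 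This yields $\int_\Omega|r_1-r_2||J_1|^2\,dx=\int_\Omega\sgn(r_1-r_2)\,A\cdot\curl[r_2(J_2-J_1)]\,dx\le\|A\|_{L^2(\Omega)}\|r_2\|_{W^{1,\infty}(\Omega)}\|J_1-J_2\|_{\Hc{\Omega}}$. Then $\|A\|_{L^2(\Omega)}\le C(\lambda,\Omega)\|C_L\|_{L^2(\Omega)}$ and $|r_1-r_2|\ge\lambda^2|\sigma_1-\sigma_2|$ give \eqref{eq:weighted}. This is the paper's proof. It uses the simple connectivity of $\Omega$ (to obtain $A$) and $\Lambda$ (through $\nabla r_2$), neither of which your argument touched.
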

\begin{proof}
    Let $r_1 = 1/\sigma_1$ and $r_2 = 1/\sigma_2$. Then
    \begin{equation*}
        \curl(r_1 J_1) = G_L = \curl(r_2 J_2) \ \text{in}\ \Omega.
    \end{equation*}
    It follows that
    \begin{equation*}
        \curl[(r_1-r_2)J_1] = \curl[r_2(J_2-J_1)] \ \text{in}\ \Omega.
    \end{equation*}
    Let $\sgn$ be the sign function on $\R$, that is,
	\begin{equation*}
		\sgn(t) = \begin{cases}
			1, & t>0, \\
			0, & t=0, \\
			-1, & t<0.
		\end{cases}
	\end{equation*}
	Then
	\begin{align}
		\curl(|r_1-r_2|J_1) &= \nabla|r_1-r_2|\times J_1 + |r_1-r_2|\curl J_1 \nonumber \\
		&= \sgn(r_1-r_2)[\nabla(r_1-r_2)\times J_1 + (r_1-r_2)\curl J_1] \nonumber \\
		&= \sgn(r_1-r_2)\curl[(r_1-r_2)J_1] \nonumber \\
		&= \sgn(r_1-r_2)\curl[r_2(J_2-J_1)]. \label{eq:id}
	\end{align}
    By \cref{cor:J_A}, there exists a vector field $A\in\Hc{\Omega}$ satisfying \eqref{eq:A} and \eqref{eq:est_A} such that $J_1 = \curl A$. Multiplying both sides of the equality \eqref{eq:id} by $A$ and integrating the left-hand side by parts, we have
    \begin{equation*}
        \int_\Omega |r_1-r_2||J_1|^2\,dx = \int_\Omega \sgn(r_1-r_2)A\cdot\curl[r_2(J_2-J_1)]\,dx,
    \end{equation*}
    where the boundary term vanishes since $A\times\nu = 0$ on $\partial\Omega$. It follows that
    \begin{align*}
        \int_\Omega |r_1-r_2||J_1|^2\,dx &\le \|A\|_{L^2(\Omega)} \|r_2\|_{W^{1,\infty}(\Omega)} \|J_1-J_2\|_{\Hc{\Omega}} \\
        &\le C(\lambda,\Lambda,\Omega)\|C_L\|_{L^2(\Omega)}\|J_1-J_2\|_{\Hc{\Omega}}.
    \end{align*}
    Noting that
    \begin{equation*}
        |r_1 - r_2| = \frac{|\sigma_1 - \sigma_2|}{\sigma_1\sigma_2} \ge \lambda^2|\sigma_1 - \sigma_2|,
    \end{equation*}
    the weighted estimate \eqref{eq:weighted} is proved.
\end{proof}

\begin{remark}
    We emphasize that no boundary condition of the form $\sigma_1 = \sigma_2$ on $\partial\Omega$ is required. 
    Indeed, one can select a vector potential $A$ associated with $J_1$ such that $A \times \nu = 0$ on $\partial\Omega$,
    which eliminates the boundary contribution arising from integration by parts.
    This observation highlights a structural advantage of the magnetic field measurement mode in MAET: 
    the internal conductivity can, in principle, be stably recovered without any a priori knowledge of its boundary values.
\end{remark}

This weighted estimate shows that $r$ can be uniquely determined where $J_1\ne 0$, and can be stably recovered in any subdomain where $|J_1|$ admits a positive lower bound. We emphasize that this condition in a subdomain should be interpreted as a local or region-of-interest statement: an external coil $L$ provides sufficient sensitivity in subdomains where the corresponding adjoint current is bounded away from zero.

\begin{example}
    If $D$ is in a plane $P$ outside $\Omega$, then its normal vector $n$ is constant. Suppose $P = \{y\in\R^3: y\cdot n = a\}$ with $a\in\R$. Then
    \begin{align}
        C_L(x) &= -\frac{\mu}{4\pi}\int_D \frac{y-x}{|y-x|^3}\,dS_y \times n, \label{eq:C_L2} \\
        G_L(x) &= -\frac{\mu}{4\pi}\left[\int_D \frac{3(y-x)\cdot n}{|y-x|^5}(y-x)\,dS_y - \left(\int_D \frac{1}{|y-x|^3}\,dS_y\right)n\right] \nonumber \\
        &= -\frac{\mu}{4\pi}\left[3(a - x\cdot n)\int_D \frac{y-x}{|y-x|^5}\,dS_y - \left(\int_D \frac{1}{|y-x|^3}\,dS_y\right)n\right]. \label{eq:G_L}
    \end{align}
    Here, note that $|a - x\cdot n| = \dist(x,P)$.
    
    Furthermore, if $D$ is a disk, say $D = B(y_0,r)\cap P$ with $r>0$ and $y_0\in P$:
    \begin{itemize}
        \item for $x\in y_0 + \R n$, which is the central axis of $D$: by symmetry we have
        \begin{equation*}
            \int_D \frac{y-x}{|y-x|^5}\,dS_y = (a - x\cdot n)\left(\int_D \frac{1}{|y-x|^5}\,dS_y\right)n,
        \end{equation*}
        so
        \begin{equation*}
            G_L(x) = -\frac{\mu}{4\pi}\left[3\dist(x,P)^2 \int_D \frac{1}{|y-x|^5}\,dS_y - \int_D \frac{1}{|y-x|^3}\,dS_y\right]n,
        \end{equation*}
        which is nonzero if
        \begin{equation*}
            \dist(x,P) > \frac{1}{\sqrt{3}}\sup_{y\in D} |y-x|.
        \end{equation*}
        In particular, $G_L\ne 0$ everywhere in $\Omega\cap(y_0+\R n)$ provided that $\dist(\Omega,P) > \diam(\Omega\cup D)/\sqrt{3}$;
        \item for $x\notin y_0 + \R n$: since $D$ has a symmetry plane across $x$, the integral
        \begin{equation*}
            \int_D \frac{y-x}{|y-x|^3}\,dS_y
        \end{equation*}
        must point to a direction that is not parallel to $n$, so $G_L(x)\ne 0$ noting that $|a - x\cdot n| = \dist(x,P) > 0$.
    \end{itemize}
    It shows that $J_L$ cannot vanish identically in any open subset of $\Omega$, otherwise it contradicts the equation \eqref{eq:r}. This observation rules out open regions of complete insensitivity, but it does not exclude isolated or lower-dimensional zero sets of $J_L$. This motivates the interior H\"older stability estimate in the following subsection, which does not require a pointwise positive lower bound for $J_L$.
\end{example}

\subsection{An interior H\"older stability}

In this subsection, we show that the equation \eqref{eq:r} prevents $J_L$ from vanishing too strongly on compact subsets of $\Omega$. Combined with the weighted estimate \eqref{eq:weighted}, this yields an interior H\"older stability estimate without imposing a pointwise positive lower bound on $J_L$. The related weighted interpolation argument has been used to derive conditional stability estimates in hybrid inverse problems in the presence of zeros of internal data \cite{Choulli2019a,Choulli2021,Bonnetier2022}. The distinctive feature of the present setting is that the required polynomial lower bound for the local $L^2$-norm of $J_L$ follows from the analyticity and intrinsic nontriviality of the coil-determined field $G_L$.

\begin{lemma}\label{lem:GL}
    Let $L\subset\R^3\setminus\overline{\Omega}$ be a nonempty $C^1$ oriented simple closed curve. Then the field $G_L$ defined by \eqref{eq:GL} is analytic in $\Omega$ and is not identically zero in any nonempty open subset of $\Omega$.
\end{lemma}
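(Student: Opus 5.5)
Write $\Psi(x) = \int_D \frac{n(y)}{|y-x|}\,dS_y$ and $\phi = \div\Psi$, so that $G_L = -\frac{\mu}{4\pi}\nabla\phi$. The plan has three steps: show analyticity of $G_L$ on $\R^3\setminus L$, reduce the non-vanishing statement to a statement about $\phi$, and then rule out $\phi$ being constant.

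\emph{Analyticity.} The choice of spanning surface $D$ does not matter. By \eqref{eq:C_L}, $C_L = -\frac{\mu}{4\pi}\curl\Psi$, and by the Biot--Savart/Stokes identity this equals $-\frac{\mu}{4\pi}\oint_L \frac{dl_y}{|y-x|}$ up to a gradient. In fact $\curl C_L = G_L$ on $\R^3\setminus L$, because $\Delta\Psi=0$ off $\overline D$. So I will work with the line-integral representation
\begin{equation*}
    C_L(x) = -\frac{\mu}{4\pi}\curl\oint_L \frac{dl_y}{|y-x|}, \qquad G_L = \curl C_L,
\end{equation*}
which depends only on $L$. This is justified by Stokes' theorem applied to $\int_D n\times\nabla_y(\cdot)$. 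The integrand $|y-x|^{-1}$ is real-analytic in $x$ away from $L$, uniformly in $y\in L$. Differentiation under the integral, together with the standard Cauchy-type derivative bounds $|\partial_x^\beta |y-x|^{-1}|\le C^{|\beta|}|\beta|!\,\dist(x,L)^{-1-|\beta|}$, then gives analyticity of $G_L$ on the connected open set $\R^3\setminus L$, and in particular in $\Omega$. Alternatively, each component of $G_L$ is harmonic off $L$, which gives analyticity directly.

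\emph{Reduction to the scalar potential.} By unique continuation for analytic functions on the connected set $\R^3\setminus L$ (a closed curve does not disconnect space), if $G_L\equiv0$ on an open subset of $\Omega$, then $G_L\equiv 0$ on all of $\R^3\setminus L$. Away from $\overline D$, $\phi$ is the potential of a double layer: indeed $\phi(x) = \int_D n(y)\cdot\nabla_x|y-x|^{-1}\,dS_y = \int_D \partial_{n_y}|y-x|^{-1}\,dS_y$, which is the solid-angle function $\pm 4\pi\cdot(\text{solid angle of }D\text{ seen from }x)/(4\pi)$. We therefore get $\nabla\phi\equiv0$ on $\R^3\setminus \overline D$. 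Here the surface $D$ can be moved, so the identity holds on $\R^3\setminus L$ in the multivalued sense.

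\emph{Contradiction.} The solid angle $\Theta(x)$ subtended by $L$ tends to $0$ as $|x|\to\infty$, so constancy forces $\Theta\equiv0$ off $\overline D$. However, the classical jump relation for the double layer potential says that $\Theta$ jumps by $4\pi$ across the interior of $D$. Equivalently, passing through the surface changes the value of $\phi$ by $4\pi$, so $\phi$ cannot vanish on both sides; this is the contradiction.

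The main obstacle is making the jump argument rigorous for a merely $C^1$ curve $L$ with a possibly irregular spanning surface $D$. I would first choose a convenient smooth (e.g.\ piecewise smooth, Lipschitz) surface $D$ bounded by $L$, using the earlier observation that $G_L$ is independent of $D$. I would then apply the jump relation only near an interior point of $D$ where $D$ is smooth. If that proves delicate, a cleaner alternative is the far-field route. Expanding $\Psi$ at infinity gives $\Psi(x) = \frac{\mathbf m}{|x|}+O(|x|^{-2})$ with $\mathbf m = \int_D n\,dS$ the magnetic moment, and hence $\phi(x) = -\mathbf m\cdot x/|x|^3 + O(|x|^{-3})$. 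The vanishing of $\nabla\phi$ forces $\mathbf m=0$, and one then proceeds to higher multipoles, though showing that not all multipoles vanish again essentially needs the jump argument. So I would commit to the jump relation, applied at a point of $D$ where $D$ is a flat piece, after modifying $D$ suitably.
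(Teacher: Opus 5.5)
Your proof is correct in substance, but it closes the argument differently from the paper. The first half agrees with the paper: analyticity from harmonicity, Stokes' formula turning $C_L$ into the line integral $-\frac{\mu}{4\pi}\oint_L\frac{d\ell_y}{|y-x|}$, and unique continuation of $\tilde G_L=\curl\tilde C_L$ on the connected set $\R^3\setminus L$.

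The paper then finishes with Amp\`ere's law in distributional form. If $\tilde G_L\equiv0$ off $L$, local integrability of the Biot--Savart field near $L$ gives $\tilde G_L=0$ as a distribution on $\R^3$. On the other hand, $\tilde C_L=-\mu\,\Gamma*\ell\delta_L$ is divergence free, so $\curl\tilde G_L=-\Delta\tilde C_L=-\mu\,\ell\delta_L\neq0$.

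You use the scalar magnetic potential instead. Off a spanning surface $D'$, $\tilde G_L=-\frac{\mu}{4\pi}\nabla\phi_{D'}$, where $\phi_{D'}$ is the signed solid angle of $D'$. Vanishing of $\tilde G_L$ makes $\phi_{D'}$ constant, hence zero by decay at infinity, which contradicts its $4\pi$ jump across $D'$. This is Amp\`ere's law in integral form: the circulation of $\tilde G_L$ around a small loop linking $L$ once.

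Your version is more geometric, but it costs you the surface. You need:
\begin{itemize}
\item a spanning surface with at least one interior point near which it is smooth;
\item either connectedness of $\R^3\setminus\overline{D'}$ (true for an embedded connected spanning surface), or a jump point chosen close to $L$ so that both sides lie in one component;
\item the local jump computation: split off a small flat disc, whose solid angle tends to $2\pi$ and $-2\pi$ from the two sides, with the remainder harmonic there.
\end{itemize}
The paper's argument needs no spanning surface beyond the Stokes step and no jump relations. It uses only the $C^1$ regularity of $L$, for local integrability, and $\div(\ell\delta_L)=0$.

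You should fix a few slips.
\begin{itemize}
\item Your displayed formula $C_L=-\frac{\mu}{4\pi}\curl\oint_L\frac{d\ell_y}{|y-x|}$ has an extraneous curl. With $V(x)=\oint_L\frac{d\ell_y}{|y-x|}$, taking it literally gives $G_L=\curl C_L=-\frac{\mu}{4\pi}\bigl(\nabla(\div V)-\Delta V\bigr)=0$ off $L$, because $V$ is harmonic there and divergence free ($L$ is closed). So the lemma would fail.
\item The correct identity, which your prose states, is $C_L(x)=-\frac{\mu}{4\pi}\oint_L\frac{d\ell_y}{|y-x|}$. It holds exactly for $x\notin\overline D$, not merely up to a gradient, by $\int_D n\times\nabla_y f\,dS_y=\oint_L f\,d\ell_y$. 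This exactness is what lets you replace the given $D$ by a convenient $D'$ and still have $\tilde G_L=-\frac{\mu}{4\pi}\nabla\phi_{D'}$ on $\R^3\setminus\overline{D'}$, since $\Delta\Psi_{D'}=0$ there.
\item The identity $\curl C_L=G_L$ obtained from $\Delta\Psi=0$ holds only off $\overline D$, not on $\R^3\setminus L$. The extension to $\R^3\setminus L$ comes from the line integral.
\item The formula should read $\phi(x)=-\int_D\partial_{n_y}|y-x|^{-1}\,dS_y$; the missing minus sign is harmless.
\end{itemize}
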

\begin{proof}
    Let $D\subset\R^3\setminus\overline{\Omega}$ be an oriented surface satisfying $\partial D = L$. Then the Newtonian potential $\int_D \frac{n(y)}{|y-x|}\,dS_y$ is harmonic in $\Omega$. Hence $G_L$ defined by \eqref{eq:GL} is also harmonic, and therefore analytic, in $\Omega$.

    We next prove the nonvanishing property of $G_L$. For $x\notin D$, Stokes' formula gives
    \begin{align*}
        C_L(x) &= -\frac{\mu}{4\pi}\int_D \frac{(y-x)\times n(y)}{|y-x|^3}\,dS_y \\
        &= -\frac{\mu}{4\pi}\int_D n(y)\times\nabla_y\frac{1}{|y-x|}\,dS_y = -\frac{\mu}{4\pi}\oint_L \frac{d\ell_y}{|y-x|},
    \end{align*}
    where $d\ell$ denotes the oriented line element along $L$. The line integral on the right-hand side is well-defined for every $x\in\R^3\setminus L$ and defines an analytic extension $\tilde{C}_L$ of $C_L$ to $\R^3\setminus L$. Set $\tilde{G}_L \coloneqq \curl\tilde{C}_L$, which is analytic in $\R^3\setminus L$ and agrees with $G_L$ in $\Omega$. 
    
    Suppose that $G_L$ vanishes identically in some nonempty open subset of $\Omega$. Then analyticity forces $\tilde{G}_L\equiv 0$ in $\R^3\setminus L$. This implies that $\tilde{G}_L = 0$ as a distribution in $\R^3$, since
    \begin{equation*}
        \tilde{G}_L(x) = \curl\tilde{C}_L(x) = -\frac{\mu}{4\pi}\oint_L \frac{(y-x)\times d\ell_y}{|y-x|^3}
    \end{equation*}
    is locally integrable in $\R^3$. On the other hand, denoting $\Gamma(x) = \frac{1}{4\pi|x|}$ and $\ell\delta_L$ as a vector-valued distribution of line integral on $L$, we have $\tilde{C}_L = -\mu\Gamma*\ell\delta_L$. Since $L$ is closed, $\div(\ell\delta_L) = 0$, and hence $\div\tilde{C}_L = 0$. It follows that
    \begin{equation*}
        \curl\tilde{G}_L = -\Delta\tilde{C}_L = -\mu(-\Delta\Gamma)*\ell\delta_L = -\mu\ell\delta_L
    \end{equation*}
    as a nonzero distribution in $\R^3$. This contradicts $\tilde{G}_L = 0$ in $\R^3$.
\end{proof}

\begin{proposition}\label{prop:lb}
    Let $L\subset\R^3\setminus\overline{\Omega}$ be a nonempty $C^1$ oriented simple closed curve, $\sigma\in\Sigma(\lambda,\Lambda)$ and $J_L = {\cal F}_L(\sigma)$. Then for any compact set $K\subset\subset\Omega$, there exist constants $c, \theta, \rho_0>0$, depending on $\lambda,\Omega,K$ and the coil $L$, such that
    \begin{equation}\label{eq:lb}
        \int_{B(x_0,\rho)} |J_L(x)|^2\,dx \ge c\rho^\theta
    \end{equation}
    for all $x_0\in K$ and $\rho\in(0,\rho_0)$.
\end{proposition}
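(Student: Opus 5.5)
The plan is to transfer a quantitative non-vanishing property of the coil field $G_L$ to the adjoint current $J_L$ through the equation \eqref{eq:r}, $\curl(rJ_L)=G_L$ with $r=1/\sigma$, tested locally against $G_L$ itself. This uses only $\sigma\ge\lambda$, i.e.\ $r\le\lambda^{-1}$, which is consistent with the constants depending on $\lambda$ and not on $\Lambda$.

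\textbf{Step 1 (polynomial lower bound for $G_L$).} By \cref{lem:GL}, $G_L$ is analytic in $\Omega$ and not identically zero near any point. Fix $x\in K$. Some Taylor coefficient of $G_L$ at $x$ is nonzero, so there is a multi-index $\beta_x$ with $\partial^{\beta_x}G_L(x)\ne0$. By continuity, $|\partial^{\beta_x}G_L|\ge\delta_x>0$ on a ball $B(x,s_x)$. Compactness of $K$ then gives a uniform order $N$ and a uniform $\delta>0$ such that every $x_0\in K$ has some $|\beta|\le N$ with $|\partial^\beta G_L(x_0)|\ge\delta$.

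Since $\dist(K,L)>0$, the line-integral representation from the proof of \cref{lem:GL} extends holomorphically to a fixed complex neighborhood of $K_d=\{x:\dist(x,K)\le d\}$. Cauchy estimates therefore bound all derivatives of $G_L$ on $K_d$ uniformly, say $|\partial^\gamma G_L|\le M R^{-|\gamma|}\gamma!$.

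Rescale by setting $g(y)=G_L(x_0+\rho y)$ for $y\in B(0,1)$. Write $g=P+R_\rho$, where $P$ is the Taylor polynomial of degree $N$. The polynomial $P$ has some coefficient of size $\ge\delta\rho^{N}/N!$. By equivalence of norms on the finite-dimensional space of polynomials of degree $\le N$, this gives $\|P\|_{L^2(B_1)}\ge c_1\rho^N$. The Cauchy bounds give $\|R_\rho\|_{L^2(B_1)}\le C\rho^{N+1}$. Hence, for $\rho<\rho_0$,
\begin{equation*}
\int_{B(x_0,\rho)}|G_L|^2\,dx\ \ge\ c_2\,\rho^{2N+3},
\end{equation*}
uniformly in $x_0\in K$. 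I expect this uniformity to be the main obstacle, and the holomorphic extension plus Cauchy estimates are the tool for handling it.

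\textbf{Step 2 (transfer to $J_L$).} Take $\eta\in C_c^\infty(B(x_0,\rho))$ with $0\le\eta\le1$, $\eta=1$ on $B(x_0,\rho/2)$, and $|\nabla\eta|\le C/\rho$; take $\rho_0<\dist(K,\partial\Omega)$. Test \eqref{eq:r} in the distributional sense with $\Phi=\eta G_L$:
\begin{equation*}
\int_{B(x_0,\rho/2)}|G_L|^2\,dx\le\int\eta|G_L|^2\,dx=\int_\Omega rJ_L\cdot\curl(\eta G_L)\,dx\le\lambda^{-1}\|J_L\|_{L^2(B(x_0,\rho))}\bigl(C\rho^{-1}\|G_L\|_{L^2(B_\rho)}+\|\nabla G_L\|_{L^2(B_\rho)}\bigr).
\end{equation*}
The uniform $C^1$ bound on $G_L$ over $K_d$ makes the bracket $\le C\rho^{1/2}$.

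Combining with Step 1 applied on $B(x_0,\rho/2)$ gives
\begin{equation*}
\|J_L\|_{L^2(B(x_0,\rho))}\ge c\,\rho^{2N+3-1/2}.
\end{equation*}
Squaring yields \eqref{eq:lb} with $\theta=4N+5$. The constants depend only on $\lambda$, $K$, $\Omega$ (through $\dist(K,\partial\Omega)$) and $L$.
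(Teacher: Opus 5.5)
Your proof is correct. It shares the paper's skeleton: uniform finite-order nondegeneracy of the analytic field $G_L$ on $K$, then testing $\curl(rJ_L)=G_L$ against a field supported in $B(x_0,\rho)$, using $r\le\lambda^{-1}$ and Cauchy--Schwarz. The key test field, however, is different.

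\textbf{The paper's route.} For each $|\beta|\le M$ it builds a bump $\psi_\beta$ whose moments against monomials of degree $\le M$ are $\beta!\,\delta_{\beta\gamma}$ (via an invertible Gram matrix). It then tests with $\psi_\beta((x-x_0)/\rho)\,e$. The pairing picks out exactly $\rho^{m+3}|\partial^\beta G_L(x_0)|$ up to $O(\rho^{M+4})$. This is linear in the nonvanishing derivative and gives $\theta=2M+5$.

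\textbf{Your route.} You test with $\eta G_L$ itself, so you need an $L^2$ lower bound for $G_L$ on small balls. You get it from the Taylor expansion plus equivalence of norms on vector polynomials of degree $\le N$. This avoids constructing the dual basis $\psi_\beta$ and is arguably more natural. The cost is that $\int\eta|G_L|^2$ is quadratic in $G_L$, so the vanishing order doubles. You end with $\theta=4N+5$ instead of $2N+5$, and hence a smaller H\"older exponent $\alpha=1/(1+\theta)$ in the subsequent interior stability theorem.

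\textbf{Two minor remarks.} First, the step you flagged as the main obstacle is not one. The holomorphic extension and Cauchy estimates are unnecessary, because $N$ is fixed. The Taylor remainder only involves the finitely many derivatives of order $N+1$. These are bounded on the compact set $K_d\subset\Omega$ (for $d<\dist(K,\partial\Omega)$) by smoothness of $G_L$ alone. Second, $G_L$ is a gradient in $\Omega$ by its definition, so $\curl G_L=0$ there. Hence $\curl(\eta G_L)=\nabla\eta\times G_L$, and the $\|\nabla G_L\|_{L^2(B_\rho)}$ term in your bracket drops out. This does not change your exponent.
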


\begin{proof}
    By \cref{lem:GL}, $G_L$ is analytic in $\Omega$ and is not identically zero in any nonempty open subset of $\Omega$. It leads to a finite-order nondegeneracy property that is uniform on $K$. Indeed, for every $x\in K$, there exists a multi-index $\beta_x$ such that $\partial^{\beta_x} G_L(x)\ne 0$. By smoothness of $G_L$ and compactness of $K$, there exists an integer $M\ge 0$ and a constant $c_0>0$ such that
    \begin{equation}\label{eq:order}
        \max_{|\beta|\le M} |\partial^\beta G_L(x)| \ge c_0, \quad \forall\,x\in K.
    \end{equation}

    For every multi-index $\beta$ with $|\beta|\le M$, choose
    $\psi_\beta\in C_c^\infty(B(0,1))$ satisfying
    \begin{equation}\label{eq:moments}
        \int_{B(0,1)} z^\gamma\psi_\beta(z)\,dz = \beta!\,\delta_{\beta\gamma}, \quad |\gamma|\le M.
    \end{equation}
    Such functions can be constructed by taking $\psi_\beta = \eta q_\beta$, where $\eta\in C_c^\infty(B(0,1))$ satisfies $0\le\eta\le 1$ and $\eta\equiv 1$ in $B(0,1/2)$, and $q_\beta$ is a polynomial of degree at most $M$.
    The coefficients of $q_\beta$ are determined by
    \eqref{eq:moments} since the associated Gram matrix
    \begin{equation*}
        \left(\int_{B(0,1)}z^{\beta+\gamma}\eta(z)\,dz\right)_{|\beta|,|\gamma|\le M}
    \end{equation*}
    is positive definite and hence invertible.

    Now fix arbitrary $x_0\in K$. By \eqref{eq:order}, there exists a multi-index $\beta$ with $m\coloneqq|\beta|\le M$, such that $|\partial^\beta G_L(x_0)| \ge c_0$. For any $\rho\in(0,\dist(K,\partial\Omega)/2)$, define
    \begin{equation*}
        e \coloneqq \frac{\partial^\beta G_L(x_0)}{|\partial^\beta G_L(x_0)|}, \quad
        \Phi_\rho(x) \coloneqq \psi_\beta\left(\frac{x-x_0}{\rho}\right)e
    \end{equation*}
    Then the Taylor expansion of $G_L$ at $x_0$, together with \eqref{eq:moments} gives 
    \begin{align*}
        \int_{B(x_0,\rho)} G_L\cdot\Phi_\rho\,dx
        &= \rho^3 \int_{B(0,1)} G_L(x_0+\rho z)\cdot e\, \psi_\beta(z)\,dz \\
        &= \rho^3 \int_{B(0,1)} \Big[\partial^\beta G_L(x_0)(\rho z)^\beta + O(\rho^{M+1})\Big] \cdot e\,\psi_\beta(z)\,dz \\
        &= \rho^{m+3} |\partial^\beta G_L(x_0)| + O(\rho^{M+4}).
    \end{align*}
    Here the remainder term admits a uniform estimate depending only on $\Omega,K,L$ since $G_L\in C^{M+1}(\overline{\Omega})$ and there are only finitely many functions $\psi_\beta$. Consequently, there exists a constant $\rho_0 = C(\Omega,K,L)\in (0,1]$ such that
    \begin{equation}\label{eq:ge}
        \int_{B(x_0,\rho)} G_L\cdot\Phi_\rho\,dx \ge \frac{c_0}{2}\rho^{m+3}, \quad \forall\,\rho\in(0,\rho_0).
    \end{equation}
    Now by the equation \eqref{eq:r},
    \begin{align}
        \int_{B(x_0,\rho)} G_L\cdot\Phi_\rho\,dx &= \int_{B(x_0,\rho)} r J_L\cdot\curl\Phi_\rho\,dx \notag \\
        &\le \lambda^{-1} \|J_L\|_{L^2(B(x_0,\rho))} \|\curl\Phi_\rho\|_{L^2(B(x_0,\rho))} \notag \\
        &\le C(\lambda,\Omega,K,L) \rho^{1/2} \|J_L\|_{L^2(B(x_0,\rho))}. \label{eq:le}
    \end{align}
    Since $\rho_0\le 1$ and $m\le M$, combining \eqref{eq:ge} and \eqref{eq:le} proves \eqref{eq:lb} with $\theta = 2M+5$.
\end{proof}

\begin{remark}
    If $G_L$ does not vanish on $K$, then $\inf_K |G_L| > 0$. In this case we can take $M=0$ in the proof above, and hence $\theta=5$.
\end{remark}

With this polynomial lower bound estimate for the local $L^2$-norm of $J_L$, we derive the following interior H\"older stability for the inverse problem by a weighted interpolation inequality.
\begin{theorem}
    Let $\Omega$ satisfy \cref{as} and $L\subset\R^3\setminus\overline{\Omega}$ be a nonempty $C^1$ oriented simple closed curve. Suppose $\sigma_1, \sigma_2\in\Sigma(\lambda,\Lambda)$ and $J_1 = {\cal F}_L(\sigma_1), J_2 = {\cal F}_L(\sigma_2)$. Then for any compact set $K\subset\subset\Omega$, there exist constants $C>0$ and $\alpha\in(0,1)$, depending on $\lambda,\Lambda,\Omega,K$ and the coil $L$, such that
    \begin{equation}\label{eq:Holder}
        \|\sigma_1 - \sigma_2\|_{L^\infty(K)} \le C\|J_1 - J_2\|_{H({\rm curl},\Omega)}^\alpha.
    \end{equation}
\end{theorem}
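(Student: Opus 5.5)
We combine the weighted estimate of \cref{prop:weighted} with the polynomial lower bound of \cref{prop:lb}, using the Lipschitz regularity of $\sigma_1-\sigma_2$ to pass from an integral bound to a pointwise one. Set $\delta\coloneqq\|J_1-J_2\|_{\Hc{\Omega}}$ and $f\coloneqq|\sigma_1-\sigma_2|$. Since $\sigma_1,\sigma_2\in\Sigma(\lambda,\Lambda)$, $f$ is Lipschitz with constant at most $2\Lambda$, and $\|f\|_{L^\infty(\Omega)}\le 2\lambda^{-1}$. Also $\|C_L\|_{L^2(\Omega)}$ depends only on $\Omega$ and $L$. So \cref{prop:weighted} gives
\begin{equation*}
    \int_\Omega f|J_1|^2\,dx\le C_1\delta, \qquad C_1=C(\lambda,\Lambda,\Omega,L).
\end{equation*}

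Fix $x_0\in K$ and set $t\coloneqq f(x_0)$. Choose $K'$ compact with $K\subset\subset K'\subset\subset\Omega$, for instance the closed $d/2$-neighbourhood of $K$ with $d=\dist(K,\partial\Omega)$. Apply \cref{prop:lb} to $J_1$ on $K$ to obtain $c,\theta,\rho_0$; shrinking $\rho_0$ if needed, we may assume $B(x_0,\rho)\subset\Omega$ for $\rho<\rho_0$. On $B(x_0,\rho)$ the Lipschitz bound gives $f\ge t-2\Lambda\rho$. Take $\rho=\min\{t/(4\Lambda),\rho_0/2\}$, so that $f\ge t/2$ on the ball. Then
\begin{equation*}
    \frac t2\, c\rho^\theta\le\int_{B(x_0,\rho)}f|J_1|^2\,dx\le C_1\delta.
\end{equation*}
We now split into two cases.
\begin{itemize}
    \item If $t/(4\Lambda)\le\rho_0/2$, then $\rho=t/(4\Lambda)$ and the display gives $t^{1+\theta}\le C\delta$, that is, $t\le C\delta^{1/(1+\theta)}$.
    \item If $t/(4\Lambda)>\rho_0/2$, then $\rho=\rho_0/2$ and we get $t\le C\delta$ directly. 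Since $t\le 2\lambda^{-1}$, this case forces $\delta\ge c'$ for some $c'>0$. For $\delta\le 1$ this still gives $t\le C\delta\le C\delta^{1/(1+\theta)}$. For $\delta>1$ the conclusion is trivial, because $t\le 2\lambda^{-1}\le 2\lambda^{-1}\delta^\alpha$.
\end{itemize}
Taking the supremum over $x_0\in K$ proves \eqref{eq:Holder} with $\alpha=1/(1+\theta)\in(0,1)$.

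The delicate point is uniformity: the constants $c,\theta,\rho_0$ in \cref{prop:lb} must not depend on the particular $\sigma_1$. They don't, because the proposition's constants depend only on $\lambda,\Omega,K,L$: the nondegeneracy order $M$ comes from $G_L$ alone, and the upper bound \eqref{eq:le} uses only $r\le\lambda^{-1}$. The constants also must not depend on $x_0$, which holds because \cref{prop:lb} is uniform over $x_0\in K$. The remaining technicality is that $f$ is only Lipschitz, not smooth, but only the pointwise Lipschitz bound is used, so this causes no difficulty.
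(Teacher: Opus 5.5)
Your proof is correct and follows essentially the same route as the paper: you combine the weighted estimate of \cref{prop:weighted} with the polynomial lower bound of \cref{prop:lb}, using the Lipschitz continuity of $\sigma_1-\sigma_2$, and obtain $\alpha=1/(1+\theta)$. The only differences are bookkeeping. You choose $\rho\sim t/(4\Lambda)$ so that $|\sigma_1-\sigma_2|\ge t/2$ on the ball, whereas the paper balances $C_0\delta\rho^{-\theta}+2\Lambda\rho$; and your explicit handling of the case $\rho=\rho_0/2$ (and of $\delta>1$) spells out a step the paper leaves implicit.
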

\begin{proof}
    Set $\sigma = \sigma_1-\sigma_2$. For any compact set $K\subset\subset\Omega$, take $x_0\in K$ such that $|\sigma(x_0)| = \|\sigma\|_{L^\infty(K)}$. For any $\rho>0$ with $B(x_0,\rho)\subset\Omega$, since $\|\sigma\|_{W^{1,\infty}(\Omega)} \le 2\Lambda$, we have
	\begin{equation*}
		|\sigma(x_0)| \le |\sigma(x)| + 2\Lambda\rho, \quad \forall\,x\in B(x_0,\rho).
	\end{equation*}
	Therefore, by the weighted stability estimate \eqref{eq:weighted} we have
    \begin{align}
        |\sigma(x_0)|\int_{B(x_0,\rho)} |J_1|^2\,dx &\le \int_{B(x_0,\rho)} |\sigma||J_1|^2\,dx + 2\Lambda\rho\int_{B(x_0,\rho)} |J_1|^2\,dx \notag \\
        &\le \int_\Omega |\sigma||J_1|^2\,dx + 2\Lambda\rho\int_{B(x_0,\rho)} |J_1|^2\,dx \notag \\
		&\le C(\lambda,\Lambda,\Omega,L)\|J_1 - J_2\|_{H({\rm curl},\Omega)} + 2\Lambda\rho\int_{B(x_0,\rho)} |J_1|^2\,dx. \label{eq:est}
    \end{align}
	According to \cref{prop:lb}, we know that there exist constants $c,\theta,\rho_0>0$, depending on $\lambda,\Omega,K,L$, such that
	\begin{equation*}
		\int_{B(x_0,\rho)} |J_1|^2\,dx \ge c\rho^\theta, \quad \forall\,\rho\in(0,\rho_0).
	\end{equation*}
	So it follows from \eqref{eq:est} that
	\begin{align*}
		\|\sigma\|_{L^\infty(K)} = |\sigma(x_0)| &\le \frac{C(\lambda,\Lambda,\Omega)\|J_1 - J_2\|_{H({\rm curl},\Omega)}}{\int_{B(x_0,\rho)} |J_1|^2\,dx} + 2\Lambda\rho \\
		&\le C_0(\lambda,\Lambda,\Omega,K,L)\|J_1 - J_2\|_{H({\rm curl},\Omega)}\rho^{-\theta} + 2\Lambda\rho.
	\end{align*}
    Taking
	\begin{equation*}
		\rho = \min\left\{\left(\frac{2\Lambda}{\theta C_0\|J_1 - J_2\|_{H({\rm curl},\Omega)}}\right)^{-1/(1+\theta)}, \rho_0\right\},
	\end{equation*}
	leads to \eqref{eq:Holder} with $\alpha = 1/(1+\theta)$.
\end{proof}

\subsection{Lipschitz stability under a pointwise nonzero constraint}

The interior H\"older stability estimate \eqref{eq:Holder} requires no pointwise positive lower bound on the adjoint current $J_L$. The price is a weaker stability modulus, whose exponent depends on the possible vanishing order of the coil-determined field $G_L$. We now turn to the stronger region-of-interest Lipschitz stability available under a pointwise nonzero constraint.

\begin{assumption}\label{as:CL}
    There exists a constant $c>0$ and a subdomain $\Omega'$ of $\Omega$, such that $|C_L|\ge c$ in $\Omega'$.
\end{assumption}

Note that $C_L$ is completely determined by the coil geometry and can be computed explicitly from the definition \eqref{eq:C_L}. Thus, for any prescribed coil $L$, one may identify the subdomain where $|C_L|$ is bounded away from zero and take it as the candidate reconstruction region $\Omega'$. In the following result, we introduce a smallness condition ensuring that the correction term $\nabla w_L$, generated by the adjoint problem, cannot substantially cancel $C_L$ in $\Omega'$.

\begin{theorem}\label{thm:lb_J}
    Let $\Omega$ satisfy \cref{as} with $\partial\Omega\in C^{1,\alpha}$ for an $\alpha\in(0,1)$ and $q = \frac{3}{1-\alpha}$. If $\sigma\in\Sigma(\lambda,\Lambda)$ and \cref{as:CL} holds, then there exists a constant $C_1$ depending on $\lambda,\Lambda,\alpha$ and $\Omega$, such that if
    \begin{equation}\label{eq:cond}
        \|\div(\sigma C_L)\|_{L^q(\Omega)} + \|C_L\cdot\nu\|_{C^{0,\alpha}(\partial\Omega)} \le \frac{c}{2C_1},
    \end{equation}
    then $J_L = \sigma(C_L - \nabla w_L)$ with $w_L$ solving \eqref{eq:w} satisfies
    \begin{equation*}
        |J_L|\ge \frac{\lambda c}{2} \ \text{in}\ \Omega'.
    \end{equation*}
\end{theorem}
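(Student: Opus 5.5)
The plan is to show that $\nabla w_L$ is uniformly small in $\Omega$, with an $L^\infty$ bound controlled by the left-hand side of \eqref{eq:cond}. Once we have
\begin{equation*}
    \|\nabla w_L\|_{L^\infty(\Omega)} \le C_1\big(\|\div(\sigma C_L)\|_{L^q(\Omega)} + \|C_L\cdot\nu\|_{C^{0,\alpha}(\partial\Omega)}\big) \le \frac{c}{2},
\end{equation*}
the conclusion is immediate. By \cref{as:CL}, in $\Omega'$ we get $|C_L-\nabla w_L|\ge |C_L|-|\nabla w_L|\ge c/2$, and therefore $|J_L| = \sigma|C_L-\nabla w_L|\ge \lambda c/2$.

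To get the gradient bound, I rewrite \eqref{eq:w} as a divergence-form Neumann problem:
\begin{equation*}
    \div(\sigma\nabla w_L) = f \coloneqq \div(\sigma C_L) \ \text{in}\ \Omega, \qquad \sigma\partial_\nu w_L = g\coloneqq \sigma C_L\cdot\nu \ \text{on}\ \partial\Omega,
\end{equation*}
and normalize $\fint_\Omega w_L\,dx=0$.

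Since $\sigma\in W^{1,\infty}(\Omega)\subset C^{0,1}(\overline\Omega)$, the coefficient is Lipschitz and hence $C^{0,\alpha}$ with norm bounded by $C(\Lambda,\Omega)$. The boundary is $C^{1,\alpha}$, so I invoke the global $C^{1,\alpha}$ Schauder-type estimate for Neumann problems with Hölder coefficients, right-hand side in $L^q$, and $C^{0,\alpha}$ conormal data (e.g.\ Lieberman, or Giaquinta--Martinazzi via Campanato spaces). This gives
\begin{equation*}
    \|w_L\|_{C^{1,\alpha}(\overline\Omega)} \le C(\lambda,\Lambda,\alpha,\Omega)\big(\|f\|_{L^q(\Omega)} + \|g\|_{C^{0,\alpha}(\partial\Omega)} + \|w_L\|_{L^2(\Omega)}\big).
\end{equation*}
The exponent $q=3/(1-\alpha)$ is exactly the one for which $L^q\subset$ Morrey space $L^{1,3-2+2\alpha}$ in three dimensions, i.e.\ $1-3/q=\alpha$. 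This is the scaling that makes an $L^q$ divergence source compatible with $C^{1,\alpha}$ regularity of the solution.

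For the boundary data, $\|\sigma C_L\cdot\nu\|_{C^{0,\alpha}(\partial\Omega)}\le C(\Lambda,\Omega)\|C_L\cdot\nu\|_{C^{0,\alpha}(\partial\Omega)}$, by the product rule for Hölder norms and the Lipschitz bound on $\sigma$.

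The remaining step is to remove the lower-order term $\|w_L\|_{L^2}$ in terms of the same data, and not in terms of $\|C_L\|_{L^2}$ (which is not small). I test the weak form with $\phi=w_L$, written as $\int\sigma|\nabla w_L|^2 = -\int f w_L + \int_{\partial\Omega} g w_L$. Then the Poincaré inequality \eqref{eq:Poincare} and the trace inequality \eqref{eq:tr} give
\begin{equation*}
    \|\nabla w_L\|_{L^2(\Omega)}\le C\big(\|f\|_{L^2(\Omega)}+\|g\|_{L^2(\partial\Omega)}\big) \le C\big(\|f\|_{L^q(\Omega)}+\|g\|_{C^{0,\alpha}(\partial\Omega)}\big),
\end{equation*}
hence the same bound for $\|w_L\|_{L^2}$. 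Combining this with the Schauder estimate and taking $C_1$ to be the product of all constants (depending only on $\lambda,\Lambda,\alpha,\Omega$) yields the desired $L^\infty$ bound on $\nabla w_L$.

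The main obstacle is the regularity step: one has to cite, or justify, a global $C^{1,\alpha}$ estimate for the conormal problem with merely Lipschitz coefficient, $L^q$ source, and only $C^{1,\alpha}$ boundary. I would flatten the boundary locally, use Campanato-type estimates for the divergence-form equation with frozen coefficients, and then patch with a partition of unity. The exponent bookkeeping here is where $q=3/(1-\alpha)$ enters.
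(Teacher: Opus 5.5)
Your proposal is correct and is essentially the paper's argument. You normalize $w_L$ to zero mean, bound $\|w_L\|_{L^2(\Omega)}$ by $\|\div(\sigma C_L)\|_{L^q(\Omega)}+\|C_L\cdot\nu\|_{C^{0,\alpha}(\partial\Omega)}$ via the energy identity together with the Poincar\'e and trace inequalities, and then apply the global $C^{1,\alpha}$ Schauder estimate for the conormal problem (Lieberman, Theorem~5.54) to get $\|\nabla w_L\|_{L^\infty(\Omega)}\le c/2$.

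The only difference is bookkeeping. The paper's Schauder estimate has $\|w_L\|_{L^\infty(\Omega)}$ as its lower-order term, so it first inserts a global De Giorgi-type bound $\|w_L\|_{L^\infty(\Omega)}\le C(\|w_L\|_{L^2(\Omega)}+\|\div(\sigma C_L)\|_{L^q(\Omega)}+\|C_L\cdot\nu\|_{L^\infty(\partial\Omega)})$ (Lieberman, Theorem~5.31 and Corollary~5.32); your $L^2$ form of the estimate absorbs this step. Incidentally, in your heuristic remark the Morrey space matching $q=3/(1-\alpha)$ is $L^{2,1+2\alpha}$ (equivalently $L^{1,2+\alpha}$), not $L^{1,1+2\alpha}$.
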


\begin{proof}
    Without loss of generality, we assume that $\int_\Omega w_L\,dx = 0$, since by \cref{prop:forward}, the solution $w_L$ is unique up to an additive constant. Then by Poincar\'e's inequality \eqref{eq:Poincare} and testing the equation \eqref{eq:w} with $w_L$ itself we have
    \begin{equation*}
        \|w_L\|_{L^2(\Omega)} \le C(\Omega)\|\nabla w_L\|_{L^2(\Omega)}\le C(\lambda,\Omega)\Big(\|\div(\sigma C_L)\|_{L^2(\Omega)} + \|C_L\cdot\nu\|_{L^2(\partial\Omega)}\Big).
    \end{equation*}
    Combining Theorem 5.31 and Corollary 5.32 in \cite{Lieberman2013}, we derive
    \begin{align*}
        \|w_L\|_{L^\infty(\Omega)} &\le C(\lambda,\alpha,\Omega)\Big(\|w_L\|_{L^2(\Omega)} + \|\div(\sigma C_L)\|_{L^q(\Omega)} + \|C_L\cdot\nu\|_{L^\infty(\partial\Omega)}\Big) \\
        &\le C(\lambda,\alpha,\Omega)\Big(\|\div(\sigma C_L)\|_{L^q(\Omega)} + \|C_L\cdot\nu\|_{L^\infty(\partial\Omega)}\Big).
    \end{align*}
    By the Schauder estimate \cite[Theorem 5.54]{Lieberman2013}
    \begin{equation*}
        \|w_L\|_{C^{1,\alpha}(\overline{\Omega})} \le C(\lambda,\Lambda,\Omega)\Big(\|w_L\|_{L^\infty(\Omega)} + \|\div(\sigma C_L)\|_{L^q(\Omega)} + \|C_L\cdot\nu\|_{C^{0,\alpha}(\partial\Omega)}\Big),
    \end{equation*}
    we have
    \begin{equation*}
        \|\nabla w_L\|_{L^\infty(\Omega)} \le C_1(\lambda,\Lambda,\alpha,\Omega)\Big(\|\div(\sigma C_L)\|_{L^q(\Omega)} + \|C_L\cdot\nu\|_{C^{0,\alpha}(\partial\Omega)}\Big).
    \end{equation*}
    Therefore, if \eqref{eq:cond} holds with the constant $C_1$ above, we have
    \begin{equation*}
        \|\nabla w_L\|_{L^\infty(\Omega)} \le \frac{c}{2},
    \end{equation*}
    so that
    \begin{equation*}
        |J_L| \ge \lambda|C_L - \nabla w_L| \ge \frac{\lambda c}{2} \ \text{in}\ \Omega'.
    \end{equation*}
\end{proof}

\begin{remark}
    The smallness condition \eqref{eq:cond} actually contains two requirements:
    \begin{enumerate}
        \item Since $\div C_L = 0$, we have $\div(\sigma C_L) = \nabla\sigma\cdot C_L$, so the bound requires that $C_L$ should be ``almost orthogonal'' to the deviation of $\sigma$. For example, if $\sigma$ varies only in one direction $e$, then we can set the coil $L$ in a plane with normal $e$, so that $\nabla\sigma\cdot C_L = 0$ since $C_L\perp e$.
        
        \item The second part requires that $C_L$ is ``almost tangential'' on $\partial\Omega$. For example, if $\Omega$ is rotationally symmetric with an axis $\R e$, then we can set the coil $L$ as a circle with the same axis, so that $C_L\cdot\nu = 0$ on $\partial\Omega$ since $C_L$ circulates around this axis; see \cref{fig:ex}. However, such $C_L$ vanishes on the axis, so we can only guarantee the non-vanishing property of $J_L$ in $\Omega'$ that is away from the axis.
    \end{enumerate}
\end{remark}

\begin{figure}[htbp]
    \centering
    \tdplotsetmaincoords{90}{105}
    \begin{tikzpicture}[tdplot_main_coords, scale=0.8, ->-/.style={
        postaction={decorate}, decoration={
            markings,
            mark=at position 0.3 with {\arrow{>}}
        }
    }, >=stealth]
        \draw[dashed] (0,-8,0) -- (0,8,0);
        \foreach \phi in {105,106,...,283} {
            \pgfmathsetmacro\tou{10+(\phi-105)*5/18}
            \fill[black!\tou, opacity=0.5] 
                plot[smooth, domain=0:180, samples=30, variable=\theta] 
                ({3*sin(\theta)*cos(\phi)}, {4*sin(\theta)*sin(\phi)-2}, {3*cos(\theta)}) --
                plot[smooth, domain=180:0, samples=30, variable=\theta] 
                ({3*sin(\theta)*cos(\phi+2)}, {4*sin(\theta)*sin(\phi+2)-2}, {3*cos(\theta)}) --
                cycle;
        }
        \foreach \phi in {105,285} {
            \draw
                plot[smooth, domain=0:180, samples=30, variable=\theta] 
                ({3*sin(\theta)*cos(\phi)}, {4*sin(\theta)*sin(\phi)-2}, {3*cos(\theta)});
        }
        \node at (0,1,1) {$\Omega$};
        \fill[black!10, opacity=0.5]
            plot[smooth, domain=0:360, samples=60, variable=\theta]
            ({1.4*cos(\theta)}, 4.5, {1.4*sin(\theta)});
        \node at (0,4.5,0.7) {$D$};
        \draw[->]
            plot[smooth, domain=360:0, samples=60, variable=\theta]
            ({1.4*cos(\theta)}, 4.5, {1.4*sin(\theta)});
        \node at (0,3.7,-0.3) {$L$};
        \draw[->] (0,4.5,0) -- (0,5.5,0);
        \node at (0,5.8,0.4) {$n = e$};
        \pgfmathsetmacro{\ra}{1.5*sqrt(3)}
        \draw[->-] 
            plot[smooth, domain=-90:90, samples=30, variable=\theta]
            ({\ra*cos(\theta)}, -4, {\ra*sin(\theta)});
        \draw[->-, densely dashed] 
            plot[smooth, domain=90:270, samples=30, variable=\theta]
            ({\ra*cos(\theta)}, -4, {\ra*sin(\theta)});
        \node at (0,-4.2,-1) {$C_L$};
        \pgfmathsetmacro{\rb}{0.75*sqrt(15)}
        \draw[->-] 
            plot[smooth, domain=-90:90, samples=30, variable=\theta]
            ({\rb*cos(\theta)}, -1, {\rb*sin(\theta)});
        \draw[->-, densely dashed] 
            plot[smooth, domain=90:270, samples=30, variable=\theta]
            ({\rb*cos(\theta)}, -1, {\rb*sin(\theta)});
        \node at (0,-1.26,-1.1) {$C_L$};
    \end{tikzpicture}
    \caption{Rotationally symmetric domain and coil with the same axis}
    \label{fig:ex}
\end{figure}
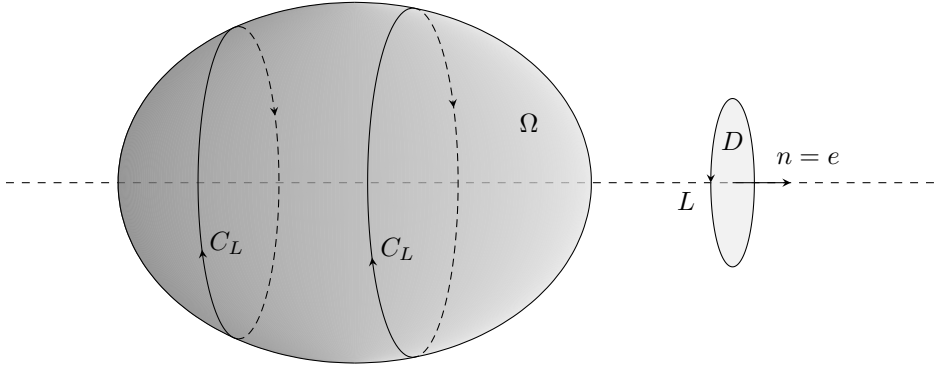

Through the weighted estimate \eqref{eq:weighted}, we directly obtain the following conditional stability result.
\begin{corollary}
    Let $\Omega$ satisfy \cref{as} with $\partial\Omega\in C^{1,\alpha}$ for an $\alpha\in(0,1)$ and $q = \frac{3}{1-\alpha}$. Suppose \cref{as:CL} holds. If $\sigma_1,\sigma_2\in\Sigma(\lambda,\Lambda)$, then there exists a constant $C_1$ depending on $\lambda,\Lambda,\alpha,\Omega$ such that if
    \begin{equation*}
        \|\nabla\sigma_1\cdot C_L\|_{L^q(\Omega)} + \|C_L\cdot\nu\|_{C^{0,\alpha}(\partial\Omega)} \le \frac{c}{2C_1},
    \end{equation*}
    then
    \begin{equation}
        \|\sigma_1 - \sigma_2\|_{L^1(\Omega')} \le C_{\rm S}\|C_L\|_{L^2(\Omega)} \|{\cal F}_L(\sigma_1) - {\cal F}_L(\sigma_2)\|_{\Hc{\Omega}}
    \end{equation}
    for some constant $C_{\rm S}>0$ depending on $\lambda,\Lambda,\alpha,\Omega$ and $c$.
\end{corollary}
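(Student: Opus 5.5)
The corollary follows by combining \cref{thm:lb_J} with the weighted estimate of \cref{prop:weighted}. The plan has three steps.

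\textbf{Step 1: apply \cref{thm:lb_J} to $\sigma_1$.} Since $\div C_L = 0$ in $\Omega$ (recall $C_L$ is a curl by \eqref{eq:C_L}), we have $\div(\sigma_1 C_L) = \nabla\sigma_1\cdot C_L$. The assumed smallness condition is therefore exactly \eqref{eq:cond} for $\sigma = \sigma_1$, with the same constant $C_1(\lambda,\Lambda,\alpha,\Omega)$ given by \cref{thm:lb_J}. Together with \cref{as:CL}, that theorem gives
\begin{equation*}
    |J_1| \ge \frac{\lambda c}{2} \quad \text{in}\ \Omega',
\end{equation*}
where $J_1 = {\cal F}_L(\sigma_1)$.

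\textbf{Step 2: bound the weighted integral from below.} Restricting the weighted integral in \eqref{eq:weighted} to $\Omega'$ and using this lower bound, I obtain
\begin{equation*}
    \frac{\lambda^2c^2}{4}\|\sigma_1-\sigma_2\|_{L^1(\Omega')} \le \int_{\Omega'}|\sigma_1-\sigma_2||J_1|^2\,dx \le \int_\Omega|\sigma_1-\sigma_2||J_1|^2\,dx.
\end{equation*}

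\textbf{Step 3: close with \cref{prop:weighted}.} That proposition bounds the right-hand side by
\begin{equation*}
    C(\lambda,\Lambda,\Omega)\|C_L\|_{L^2(\Omega)}\|J_1-J_2\|_{\Hc{\Omega}}.
\end{equation*}
Dividing by $\lambda^2c^2/4$ gives the claimed estimate with
\begin{equation*}
    C_{\rm S} = \frac{4C(\lambda,\Lambda,\Omega)}{\lambda^2c^2},
\end{equation*}
which depends only on $\lambda,\Lambda,\Omega$ and $c$, and so is admissible as depending on $\lambda,\Lambda,\alpha,\Omega$ and $c$.

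The only point needing care is that the smallness condition is imposed on $\sigma_1$ alone. This is enough because the weight in \eqref{eq:weighted} involves only $J_1$. No lower bound on $J_2$ is needed, and no condition on $\sigma_2$ beyond $\sigma_2\in\Sigma(\lambda,\Lambda)$. I expect no genuine obstacle beyond checking these correspondences.
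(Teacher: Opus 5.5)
Your proposal is correct and follows the same route as the paper. The paper obtains the corollary directly from the weighted estimate \eqref{eq:weighted}, using the lower bound $|J_1|\ge\lambda c/2$ on $\Omega'$ from \cref{thm:lb_J}; that theorem applies because $\div(\sigma_1 C_L)=\nabla\sigma_1\cdot C_L$, as $C_L$ is divergence-free.
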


\subsection{Nonzero constraint and stability in half-space}

In this subsection, we consider when the imaging domain $\Omega$ is regarded as a half-space, say $\R^3_+\coloneqq\R^2\times(0,\infty)$, which is a reasonable approximation of the scenario that the coil $L$ is placed near a flat surface of a biological tissue. This half-space geometry should be understood as an idealized full-aperture setting. More precisely, we assume that point-source acoustic excitations can be generated for all source locations on the entire external plane. Under this full-aperture assumption, the acoustic inverse source problem in the first reconstruction step is treated as stably solvable. The sources producing the prescribed static magnetic field $B_0$, as well as the measuring coil $L$, are assumed to be located outside the imaging region. Under these settings, the adjoint current density $J_L$ is regarded as already reconstructed, and the purpose of this subsection is to analyze the stability of recovering the conductivity $\sigma$ from $J_L$. We do not address here the limited-aperture case, where measurements are available only on a bounded surface and only visible singularities of the acoustic source can be expected to be reconstructed stably.

Setting the measuring coil $L$ in an exterior plane parallel to the boundary $\partial\Omega = \R^2\times\{0\}$, we know from \eqref{eq:C_L2} that $C_L\cdot\nu = 0$ on $\partial\Omega$, so that the smallness condition \eqref{eq:cond} contains only the source term.
However, we cannot directly use the estimates above to derive analogous results, since $\Omega$ is unbounded in this case. For simplicity, we make the following assumptions; see \cref{fig:half-space}.
\begin{assumption}\label{as:sigma}
    Let $\sigma\colon \R^3_+\to\R_+$ satisfy
    \begin{itemize}
        \item $\sigma\equiv\sigma_0$ outside a half-ball $B_+(R)\coloneqq\R^3_+\cap\{|x|<R\}$ with $R\ge 1$;
        \item $\sigma\in\Sigma(\lambda,\Lambda)$ with $\Omega$ replaced by $B_+(R)$.
    \end{itemize}
    Noting that $\sigma\in C^0(\overline{B_+(R)})$ by Sobolev embedding, we further assume that 
    \begin{itemize}
        \item $\sigma$ is continuous across the interface $\partial B_+(R)\cap\R^3_+$.
    \end{itemize}
\end{assumption}

\begin{figure}[htbp]
    \centering
    \tdplotsetmaincoords{100}{80}
    \begin{tikzpicture}[tdplot_main_coords, scale=1.5]
        \fill[gray!20, opacity=0.7] (-2,-2,0) -- (2,-2,0) -- (2,2,0) -- (-2,2,0) -- cycle;
        \foreach \phi in {80,81,...,258} {
            \pgfmathsetmacro\tou{10+(260-\phi)*5/18}
            \fill[black!\tou, opacity=0.5] 
                plot[smooth, domain=0:90, samples=15, variable=\theta] 
                ({sin(\theta)*cos(\phi)}, {sin(\theta)*sin(\phi)}, {cos(\theta)}) --
                plot[smooth, domain=90:0, samples=15, variable=\theta] 
                ({sin(\theta)*cos(\phi+2)}, {sin(\theta)*sin(\phi+2)}, {cos(\theta)}) --
                cycle;
        }
        \foreach \phi in {80,260} {
            \draw
                plot[smooth, domain=0:90, samples=30, variable=\theta] 
                ({sin(\theta)*cos(\phi)}, {sin(\theta)*sin(\phi)}, {cos(\theta)});
        }
        \draw
            plot[smooth, domain=80:260, samples=30, variable=\phi] 
            ({cos(\phi)}, {sin(\phi)}, 0);
        \draw[dashed] 
            plot[smooth, domain=-100:80, samples=30, variable=\phi] 
            ({cos(\phi)}, {sin(\phi)}, 0);
        \node at (0,0,-0.7) {$\sigma\equiv 0$};
        \node at (0,1.5,1) {$\sigma\equiv\sigma_0$};
        \node at (1,0,0.3) {$B_+(R)$};
        \node at (-1.2,-1.5,0) {$\R^2\times\{0\}$};
    \end{tikzpicture}
    \caption{Assumptions of conductivity distribution in half-space}
    \label{fig:half-space}
\end{figure}
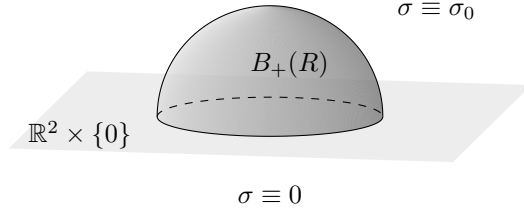

With this assumption, the solution of \eqref{eq:w} can be expressed using a Green function of the operator $-\div(\sigma\nabla)$ with Neumann boundary condition in the half-space $\R^3_+$:
\begin{equation}\label{eq:Green}
    w_L(x) = -\int_{B_+(R)} G(x,y)f(y)\,dy
\end{equation}
with $f = \nabla\sigma\cdot C_L$, which vanishes outside $B_+(R)$ due to the first assumption on $\sigma$. The existence of $G$ and its well-known global pointwise estimate
\begin{equation}\label{eq:G}
    0 \le G(x,y) \le C(\lambda)|x-y|^{-1}, \quad x\ne y,
\end{equation}
can be demonstrated as in \cite{Hofmann2007} and \cite{Kang2010} for Dirichlet problem in unbounded domain. Using the Green function representation \eqref{eq:Green} and the pointwise estimate \eqref{eq:G}, we first derive that for any $x\in B_+(2R)$,
\begin{align}
    |w_L(x)| &\le C(\lambda)\int_{B_+(R)} \frac{|f(y)|}{|x-y|}\,dy \notag \\
    &\le C(\lambda)\|f\|_{L^\infty(B_+(R))} R^2. \label{eq:est_0}
\end{align}
Next, we proceed as in \cite[Lemma 3.1]{Gruter1982} to derive the pointwise estimate of $\nabla w_L$ in $B_+(R)$:
\begin{lemma}
    Let $\sigma\in\Sigma(\lambda,\Lambda)$ with $\Omega=\R^3_+$. Suppose $v$ is a bounded solution to 
    \begin{equation*}
        \begin{cases}
            \div(\sigma\nabla v) = f & \text{in}\ \R^3_+, \\
            \partial_\nu v = 0 & \text{on}\ \partial\R^3_+
        \end{cases}
    \end{equation*}
    with $f\in L^\infty(\R^3_+)$. Then for any $R\ge 1$ and any $x\in B_+(2R)$,
    \begin{equation}\label{eq:est1}
        |\nabla v(x)| \le C(\lambda,\Lambda)\frac{R}{2R-|x|}\left(\|f\|_{L^\infty(B_+(2R))} + \|v\|_{L^\infty(B_+(2R))}\right).
    \end{equation}
    In particular,
    \begin{equation}\label{eq:est_1}
        \|\nabla v\|_{L^\infty(B_+(R))} \le C(\lambda,\Lambda)\left(\|f\|_{L^\infty(B_+(2R))} + \|v\|_{L^\infty(B_+(2R))}\right).
    \end{equation}
\end{lemma}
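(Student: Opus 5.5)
The plan is to reduce to a local gradient estimate of Grüter--Widman type on balls that do not see the unbounded part of the domain, and then choose the radius proportional to the distance to the outer sphere $|x|=2R$. Fix $x\in B_+(2R)$ and set $d \coloneqq 2R-|x|>0$, so that $\overline{B_+(x,d)}\coloneqq \overline{B(x,d)}\cap\overline{\R^3_+}$ lies in $\overline{B_+(2R)}$. The natural scale is $s\coloneqq\min\{d,1\}/2$. The argument distinguishes two cases. If $x_3\ge s$, the ball $B(x,s)$ is interior to $\R^3_+$ and we use interior estimates. If $x_3<s$, we flatten nothing: the boundary $\partial\R^3_+$ is already flat, and the Neumann condition lets us reflect evenly across $\{x_3=0\}$.

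\emph{Reflection.} Define $\tilde v(x',x_3)=v(x',|x_3|)$, $\tilde\sigma(x',x_3)=\sigma(x',|x_3|)$ and $\tilde f(x',x_3)=f(x',|x_3|)$. The even extension of $\sigma$ remains Lipschitz with the same bound $\Lambda$ and the same ellipticity $\lambda$. Because $\partial_\nu v=0$, the weak formulation shows that $\div(\tilde\sigma\nabla\tilde v)=\tilde f$ holds weakly in the full ball $B((x',0),2s)$. Both cases therefore reduce to a single interior problem on a ball $B(z,s)\subset B(0,2R)$ (after reflection), with $\|\tilde v\|_{L^\infty}\le \|v\|_{L^\infty(B_+(2R))}$ and similarly for $\tilde f$.

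\emph{Scaled interior estimate.} Rescale by $y=z+s\xi$ and set $u(\xi)=\tilde v(z+s\xi)$. Then $\div(\tilde\sigma(z+s\cdot)\nabla u)=s^2\tilde f(z+s\cdot)$ on $B(0,1)$. The rescaled coefficient still has ellipticity $\lambda$, and since $s\le1$ its Lipschitz constant is at most $\Lambda$. Standard $C^{1,\alpha}$ interior estimates (or the $W^{2,p}$ estimates with $p>3$ and Sobolev embedding) then give
\begin{equation*}
|\nabla u(0)|\le C(\lambda,\Lambda)\big(\|u\|_{L^\infty(B_1)}+s^2\|\tilde f\|_{L^\infty}\big).
\end{equation*}
Scaling back yields $|\nabla v(x)|\le C s^{-1}(\|v\|_\infty+\|f\|_\infty)$, using $s\le1$. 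In the boundary case the point $x$ lies in $B(z,s)$ with $z=(x',0)$, so we apply the estimate on $B(z,2s)$ and evaluate at an interior point at distance at most $s$ from the center. This costs only a constant.

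\emph{Conversion to the stated bound.} We have $s^{-1}=2\max\{1/d,1\}$. Since $R\ge1$ and $d\le 2R$, we get $1\le 2R/d$ and $1/d\le R/d$, hence $s^{-1}\le 4R/(2R-|x|)$. This gives \eqref{eq:est1}. For $x\in B_+(R)$ we have $2R-|x|\ge R$, so the prefactor is at most $1$, which yields \eqref{eq:est_1}.

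The main obstacle I expect is justifying that the reflected function is a weak solution across the flat boundary, and that the constant in the interior gradient estimate depends only on $\lambda$ and $\Lambda$ after rescaling. Both the dependence on the Lipschitz norm of $\tilde\sigma$ and the $L^\infty$-to-$C^1$ bound must be handled carefully. I would first try the De Giorgi--Nash bound combined with the Schauder estimate \cite[Theorem 5.54]{Lieberman2013} applied locally, as in \cite[Lemma 3.1]{Gruter1982}.
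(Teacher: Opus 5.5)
Your proof is correct, but after the common first step it takes a different route from the paper. That shared step is the even reflection across $\partial\R^3_+$, which the paper also uses and which is routine once test functions are split into even and odd parts.

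The paper does not invoke any quantitative interior estimate. Following \cite[Lemma 3.1]{Gruter1982}, it sets $M_0=\sup_{B(0,2R)}|v|$ and $M_1=\sup_{B(0,2R)}(2R-|x|)|\nabla v(x)|$, and takes a point $x_0$ where $M_1$ is attained. It then tests the equation with $\eta\,G_0(\cdot,y)$, where $G_0$ is the fundamental solution of the frozen operator $-\sigma(x_0)\Delta$, and differentiates in $y$ at $y=x_0$ to get a representation of $\nabla v(x_0)$. The perturbation term $(\sigma-\sigma(x_0))\nabla v$ is controlled by $|\sigma(x)-\sigma(x_0)|\le\Lambda|x-x_0|$ and by $|\nabla v|\le M_1/(2R-|x_0|-r)$ on $B(x_0,r)$. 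A radius $r$ comparable to $\min\{2R-|x_0|,1/C_0\}$ lets this term be absorbed, which gives $M_1\le CR(\|f\|_{L^\infty}+M_0)$.

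You instead work at the scale $s=\min\{2R-|x|,1\}/2$ and rescale to a unit ball, correctly noting that $s\le 1$ keeps the Lipschitz constant of the coefficient below $\Lambda$. You then apply a standard $C^{1,\alpha}$ or $W^{2,p}$ interior estimate as a black box.

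The underlying mechanism is the same in both proofs: the Lipschitz bound on $\sigma$ only permits freezing the coefficient at unit scale, and that is where the factor $R$ enters. What each buys:
\begin{itemize}
\item The paper's argument is self-contained. It needs only qualitative regularity of $v$ and the explicit Laplace kernel, and the absorption through the weighted norm $M_1$ replaces any appeal to uniform constants in Schauder theory.
\item Yours is shorter and modular, with the uniformity in $(\lambda,\Lambda)$ coming from scaling. As a by-product it gives the sharper intermediate bound $|\nabla v(x)|\le C(\lambda,\Lambda)\max\{1,(2R-|x|)^{-1}\}(\|f\|_{L^\infty(B_+(2R))}+\|v\|_{L^\infty(B_+(2R))})$. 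This improves on \eqref{eq:est1} when $2R-|x|\ll R$, although the two coincide for \eqref{eq:est_1}, which is the form used later.
\end{itemize}
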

\begin{proof}
    Extend $\sigma,v$ and $f$ evenly across $\partial\R^3_+$. The homogeneous Neumann condition implies that the extensions, denoted by the same symbols, satisfy $\div(\sigma\nabla v) = f$ weakly in $\R^3$. By standard interior regularity, $v\in C^1(\overline{B(0,2R)})$. Set
    \begin{equation*}
        M_0 \coloneqq \sup_{B(0,2R)}|v|, \quad M_1 \coloneqq \sup_{B(0,2R)} (2R-|x|)|\nabla v(x)|.
    \end{equation*}
    If $M_1 = 0$, there is nothing to prove. Otherwise, there exists an $x_0\in B_+(2R)$ such that $(2R-|x_0|)|\nabla v(x_0)| = M_1 > 0$. 
    For any $r\le R-\frac{1}{2}|x_0|$, define a cut-off function $\eta\in C_c^\infty(B(x_0,r))$ satisfying
    \begin{equation*}
        0\le\eta\le 1, \quad \eta = 1\ \text{in}\ B(x_0,r/2), \quad |\nabla\eta|\le \frac{C}{r}, \quad |\Delta\eta|\le \frac{C}{r^2}.
    \end{equation*}
    Denote the fundamental solution of $-\sigma(x_0)\Delta$ by $G_0(x,y) = \sigma(x_0)^{-1}\Gamma(x-y)$.
    Applying $\eta G_0(\cdot,y)$ with arbitrary $y\in B_r\coloneqq B(x_0,r)$ as a test function leads to
    \begin{align}
        -\int_{B_r} f\eta G_0\,dx ={}& \int_{B_r} \sigma\nabla v\cdot\nabla_x(\eta G_0)\,dx \notag \\
        ={}& \sigma(x_0)\int_{B_r} \nabla v\cdot(\eta\nabla_x G_0 + G_0\nabla\eta)\,dx \label{eq:test} \\
        &+ \int_{B_r} (\sigma - \sigma(x_0)) \nabla v \cdot(\eta\nabla_x G_0 + G_0\nabla\eta)\,dx. \notag
    \end{align}
    Integrating the first term by parts, it becomes
    \begin{align*}
        & \sigma(x_0)\int_{B_r} \nabla v\cdot(\eta\nabla_x G_0 + G_0\nabla\eta)\,dx \\
        ={}& \sigma(x_0)\int_{B_r} [\nabla(\eta v) - v\nabla\eta]\cdot\nabla_x G_0\,dx - \sigma(x_0)\int_{B_r} v \nabla_x\cdot(G_0\nabla\eta)\,dx \\
        ={}& \eta(y)v(y) - \sigma(x_0)\int_{B_r} v(2\nabla\eta\cdot\nabla_x G_0 + G_0\Delta\eta)\,dx.
    \end{align*}
    Therefore, for $i=1,2,3$, differentiating \eqref{eq:test} with respect to $y_i$ and letting $y = x_0$ we obtain
    \begin{align*}
        \partial_i v(x_0) ={}& -\int_{B_r} f\eta\,\partial_{y_i}G_0(x,x_0)\,dx \\
        &+ \sigma(x_0)\int_{B_r} v[2\nabla\eta\cdot\partial_{y_i}\nabla_x G_0(x,x_0) + \Delta\eta\,\partial_{y_i} G_0(x,x_0)]\,dx \\
        &- \int_{B_r} (\sigma - \sigma(x_0)) \nabla v\cdot[\eta\,\partial_{y_i}\nabla_x G_0(x,x_0) + \partial_{y_i}G_0(x,x_0)\nabla\eta]\,dx.
    \end{align*}
    Since direct computations on $G_0$ show that
    \begin{equation*}
        |\nabla_y G_0(x,y)|\le C(\lambda)|x-y|^{-2}, \quad |\nabla_y\nabla_x G_0(x,y)|\le C(\lambda)|x-y|^{-3}
    \end{equation*}
    for all $x,y\in\R^3$ with $x\ne y$, we derive that
    \begin{align*}
        |\nabla v(x_0)| \le{}& C(\lambda)\bigg(
            \int_{B_r} \frac{|f(x)|}{|x-x_0|^2}\,dx 
            + \frac{M_0}{r} \int_{B_r\setminus B_{r/2}} \frac{1}{|x-x_0|^3}\,dx \\
            &+ \frac{M_0}{r^2} \int_{B_r\setminus B_{r/2}} \frac{1}{|x-x_0|^2}\,dx
        \bigg) \\
        &+ \frac{C(\lambda,\Lambda)M_1}{2R-|x_0|-r} \bigg(
            \int_{B_r} \frac{1}{|x-x_0|^2}\,dx 
            + \frac{1}{r} \int_{B_r\setminus B_{r/2}} \frac{1}{|x-x_0|}\,dx
        \bigg) \\
        \le{}& C(\lambda)\left(\|f\|_{L^\infty(B(0,2R))} r + \frac{M_0}{r}\right) + \frac{C_0(\lambda,\Lambda)M_1 r}{2R-|x_0|-r}.
    \end{align*}
    Noting that $|\nabla v(x_0)| = \frac{M_1}{2R-|x_0|}$, we take
    \begin{equation*}
        r = \min\left\{R-\frac{1}{2}|x_0|,\ \frac{2R-|x_0|}{1+2C_0(2R-|x_0|)}\right\}
    \end{equation*}
    so that the last term can be absorbed. Since $R\ge 1$ and $r\le \frac{1}{2C_0}$, we obtain
    \begin{align*}
        M_1 &\le C(\lambda,\Lambda)\left(\frac{2R-|x_0|}{2C_0}\|f\|_{L^\infty(B(0,2R))} + M_0\max\{2,\ 1+2C_0(2R-|x_0|)\}\right) \\
        &\le C(\lambda,\Lambda)R \big(\|f\|_{L^\infty(B(0,2R))} + M_0\big).
    \end{align*}
    Restricting the even extensions to $\R^3_+$ proves \eqref{eq:est1}, and \eqref{eq:est_1} follows from $2R-|x|\ge R$ for $x\in B_+(R)$.
\end{proof}

Combining \eqref{eq:est_1} with the estimate \eqref{eq:est_0}, we obtain
\begin{equation}\label{eq:C1}
    \sup_{B_+(R)} |\nabla w_L| \le C_1(\lambda,\Lambda) R^2 \|\nabla\sigma\cdot C_L\|_{L^\infty(B_+(R))}.
\end{equation}
In conclusion, we have
\begin{theorem}\label{thm:stab_ball}
    Let $\sigma_1,\sigma_2$ satisfy \cref{as:sigma} and $J_1 = {\cal F}_L(\sigma_1)$, $J_2 = {\cal F}_L(\sigma_2)$. Suppose
    \begin{itemize}
        \item $L\subset\R^2\times\{-a\}$ with $a>0$, and
        \item $|C_L|\ge c$ for a constant $c>0$ in a subdomain $\Omega'$ of $B_+(R)$.
    \end{itemize}
    Then there exists a constant $C_1$ depending on $\lambda,\Lambda$, such that if
    \begin{equation}\label{eq:small}
        \|\nabla\sigma_1\cdot C_L\|_{L^\infty(B_+(R))} \le \frac{c}{2C_1 R^2},
    \end{equation}
    then $|J_1| \ge \lambda c/2$ in $\Omega'$, and
    \begin{equation}\label{eq:stab}
        \|\sigma_1 - \sigma_2\|_{L^1(\Omega')} \le C_{\rm S} R^{5/2} \|J_1 - J_2\|_{\Hc{B_+(R)}}
    \end{equation}
    for some constant $C_{\rm S}>0$ depending on $\lambda,\Lambda,a,c$ and the area bounded by $L$.
\end{theorem}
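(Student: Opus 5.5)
The proof has two parts: a pointwise lower bound for $|J_1|$ on $\Omega'$, and then the weighted estimate of \cref{prop:weighted} carried out on the bounded domain $B_+(R)$ with explicit dependence on $R$.

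\textbf{Lower bound.} Since $L\subset\R^2\times\{-a\}$, formula \eqref{eq:C_L2} shows that $C_L$ is tangential on $\partial\R^3_+$. Hence $w_1$ solves the Neumann problem with homogeneous boundary data and source $f=\nabla\sigma_1\cdot C_L$. By \cref{as:sigma}, $f$ is supported in $\overline{B_+(R)}$, using continuity of $\sigma_1$ across the interface and $\div C_L=0$. Under the smallness assumption \eqref{eq:small}, the bound \eqref{eq:C1} gives $\sup_{B_+(R)}|\nabla w_1|\le c/2$. Therefore $|J_1|\ge\lambda|C_L-\nabla w_1|\ge\lambda c/2$ on $\Omega'$, exactly as in \cref{thm:lb_J}.

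\textbf{Weighted estimate.} Set $r_i=1/\sigma_i$. Equation \eqref{eq:r} holds in $\R^3_+$ for both conductivities, so identity \eqref{eq:id} holds on $B_+(R)$. We need a vector potential $A$ with $J_1=\curl A$, $A\times\nu=0$ on $\partial B_+(R)$, and $\|A\|_{L^2}$ controlled. This fails in general, because $J_1\cdot\nu$ need not vanish on the spherical part of $\partial B_+(R)$.

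The approach I would try instead is a cutoff. Take $\chi\in C_c^\infty(B(0,R))$ with $\chi=1$ on $\Omega'$. (If $\Omega'$ is not compactly contained, first shrink $R$ by a fixed amount, or work on $B_+(2R)$.) Then
\begin{equation*}
\int\chi|r_1-r_2||J_1|^2\,dx=\int \chi\,\sgn(r_1-r_2)\,A\cdot\curl[r_2(J_2-J_1)]\,dx+\int |r_1-r_2|\,(\nabla\chi\times A)\cdot J_1\,dx .
\end{equation*}
The second term does not carry the difference $J_1-J_2$, so it spoils Lipschitz stability. To avoid it, I would instead take the whole-space potential: extend $J_1$ by zero across $\partial\R^3_+$. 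It is divergence-free there because $J_1\cdot\nu=0$ on the flat boundary, so $A=\curl(\Gamma*J_1)$ works. The boundary term on the flat part then has to be handled by an even/odd reflection argument ensuring $A\times\nu=0$ on $\R^2\times\{0\}$. This is the Neumann-type vector potential for the half-space, obtained by reflecting $J_1$ so that its normal component is odd and its tangential components are even.

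On the spherical part, $r_1=r_2=1/\sigma_0$ near and outside $\partial B(0,R)$. So we may integrate over all of $\R^3_+$, with $|r_1-r_2|$ supported in $\overline{B_+(R)}$. The only integrability issue is at infinity, and it disappears because $|r_1-r_2|$ vanishes outside $B_+(R)$. Concretely, the integration by parts is done with $A$ multiplied against a quantity supported in $\overline{B_+(R)}$, so no boundary term arises at $|x|=R$, since $\sgn(r_1-r_2)\,r_2(J_2-J_1)$ is localized suitably. I expect this localization to be the main technical obstacle. The natural fix is to replace $\curl[r_2(J_2-J_1)]$ by $\curl[(r_1-r_2)J_1]$, which is supported in $\overline{B_+(R)}$, and only afterwards expand.

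\textbf{Scaling.} The result is
\begin{equation*}
\lambda^2\int_{\Omega'}|\sigma_1-\sigma_2|\,\frac{\lambda^2c^2}{4}\,dx\le \|A\|_{L^2(B_+(R))}\,\|r_2\|_{W^{1,\infty}}\,\|J_1-J_2\|_{\Hc{B_+(R)}} .
\end{equation*}
It remains to bound $\|A\|_{L^2(B_+(R))}$. For a planar coil, \eqref{eq:C_L2} gives $|C_L|\le C(a,\Area(D))$ pointwise. Then $J_1=\sigma_1(C_L-\nabla w_1)$ is bounded by $C(\lambda,a,\Area(D))$, using $|\nabla w_1|\le c/2\le C$ and the decay of $\nabla w_1$ from \eqref{eq:est1} together with the Green function bound \eqref{eq:G}. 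Since $A=\curl(\Gamma*\tilde J_1)$ is a Riesz-type potential of order one, $|A|\lesssim\int|\tilde J_1|\,|x-y|^{-2}\,dy$. Estimating this over $B_+(R)$ with pointwise bounds gives $\|A\|_{L^2(B_+(R))}\le C\,R^{3/2}\cdot R=C R^{5/2}$, which yields \eqref{eq:stab}. Checking that this gives exactly $R^{5/2}$, rather than a worse power from the tail of $J_1$ at infinity, is the step I am least sure of. My first attempt would be to use that $C_L$ decays like $|x|^{-3}$ (dipole), so the tail contributes only a bounded amount.
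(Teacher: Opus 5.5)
Your lower bound is the paper's argument, and your handling of the weighted identity is sound. Since $\sigma_1=\sigma_2=\sigma_0$ on and outside the spherical interface, the weight $|r_1-r_2|$ is Lipschitz and supported in $\overline{B_+(R)}$. This is exactly where the continuity assumption in \cref{as:sigma} enters; $r_1=r_2$ holds on and outside the sphere, not in a neighborhood of it, but continuity is all you need. Hence the curved boundary term drops out, and only $A\times\nu=0$ on the flat part is required.

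Your reflected Biot--Savart potential supplies this. Reflecting $J_1$ with even tangential and odd normal components gives $\tilde J$ with $\div\tilde J=0$ in $\R^3$, because $J_1\cdot\nu=0$ on $\partial\R^3_+$. Then $A=\curl(\Gamma*\tilde J)$ has tangential components odd in $x_3$, so $A\times\nu=0$ on the plane.

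This is a different route from the paper, which never leaves $B_+(R)$. \cref{lem:A_ball} extends $J_1$ by a harmonic gradient into $B_+(2R)\setminus\overline{B_+(R)}$, applies \cref{lem:vec_pot}, and scales to get $\|A\|_{L^2(B_+(R))}\le CR\|J_1\|_{L^2(B_+(R))}$. The bound $\|J_1\|_{L^2(B_+(R))}\le CR^{3/2}$ then follows from Caccioppoli, \eqref{eq:G}, \eqref{eq:small} and $|C_L|\le Ca^{-2}\Area D$.

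The gap is your final step, the one you flag yourself. A global potential depends on $J_1$ in all of $\R^3_+$. A pointwise Riesz-potential bound therefore needs both global boundedness and decay of $J_1$; for a merely bounded $\tilde J$ the integral $\int|\tilde J(y)||x-y|^{-2}\,dy$ diverges.

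Neither property follows from what you cite:
\begin{itemize}
\item \eqref{eq:C1} and \eqref{eq:small} control $\nabla w_1$ only on $B_+(R)$.
\item \eqref{eq:est1} degenerates as $|x|\to 2R$ and gives no decay.
\item Your proposed remedy misstates the decay rate. $C_L(x)=-\frac{\mu}{4\pi}\oint_L\frac{d\ell_y}{|y-x|}$ is the vector potential of the coil and decays like $|x|^{-2}$; it is $G_L=\curl C_L$ that decays like $|x|^{-3}$.
\end{itemize}
The $|x|^{-2}$ rate still suffices for the $C_L$ part. You would, however, also need exterior decay of $\nabla w_1$. This can come from harmonicity of the evenly reflected $w_1$ outside $B(0,R)$, together with $|w_1(x)|\le C\|f\|_{L^\infty}R^3/|x|$ for $|x|\ge 2R$.

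The cleanest repair within your framework avoids pointwise bounds altogether. Since $\div A=0$, $\|\nabla A\|_{L^2(\R^3)}=\|\tilde J\|_{L^2(\R^3)}$. Sobolev embedding, H\"older, and the global energy estimate then give
$\|A\|_{L^2(B_+(R))}\le CR\|A\|_{L^6(\R^3)}\le CR\|J_1\|_{L^2(\R^3_+)}\le C(\lambda)R\|C_L\|_{L^2(\R^3_+)}$.
Here $\|C_L\|_{L^2(\R^3_+)}$ is finite and controlled by the coil alone, for instance via $|C_L(x)|\le\frac{\mu}{4\pi}\int_D|x-y|^{-2}\,dS_y$. This even yields the factor $R$ in place of $R^{5/2}$. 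The paper's local construction gives up that sharper power in exchange for not having to analyze anything at infinity.
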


\begin{remark}
    The parameter $R$ is the radius of the region within which the conductivity $\sigma$ is inhomogeneous and the source $\nabla\sigma\cdot C_L$ is supported. It impacts the stability estimate in two ways: a larger $R$ results in a more restrictive smallness condition \eqref{eq:small} on the unknown conductivity, as well as a larger stability constant $C_{\rm S}R^{5/2}$.
\end{remark}

The lower bound of $|J_1|$ comes directly from \eqref{eq:C1} and \eqref{eq:small}. To derive the stability estimate \eqref{eq:stab}, we still cannot directly use the previous weighted estimate \eqref{eq:weighted}, since it involves the application of \cref{lem:vec_pot}. This result of normal vector potential only applies for bounded domains and for vector fields that are tangential on the whole boundary. Now if we tend to find a vector potential $A$ of $J_1$ in $B_+(R)$, since $J_1$ is tangential only on the flat boundary $\Gamma_0\coloneqq\partial B_+(R)\cap\partial\R^3_+$, we can only construct $A$ that is normal on $\Gamma_0$. The idea is to extend $J_1$ to a slightly larger half-ball so that it is tangential on the whole boundary, and we can apply \cref{lem:vec_pot}.

\begin{lemma}\label{lem:A_ball}
    For any vector field $F\in\Hd{B_+(R)}$ satisfying
    \begin{equation}\label{eq:F_ball}
        \begin{cases}
            \div F = 0 & \text{in}\ B_+(R), \\ 
            F\cdot\nu = 0 & \text{on}\ \Gamma_0,
        \end{cases}
    \end{equation}
    there exists a vector field $A\in\Hc{B_+(R)}$ satisfying
    \begin{equation}\label{eq:A_ball}
        \begin{cases}
            \div A = 0 & \text{in}\ B_+(R), \\
            A\times\nu = 0 & \text{on}\ \Gamma_0
        \end{cases}
    \end{equation}
    and a generic constant $C>0$, such that $F = \curl A$ in $B_+(R)$ and
    \begin{equation}\label{eq:A_est}
        \|A\|_{L^2(B_+(R))} \le CR\|F\|_{L^2(B_+(R))}.
    \end{equation}
\end{lemma}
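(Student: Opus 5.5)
The plan follows the hint given just before the statement: extend $F$ to a larger bounded domain on which it is divergence-free and tangential on the entire boundary, then apply \cref{lem:vec_pot} there and restrict. The factor $R$ in \eqref{eq:A_est} will come from scaling.

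\textbf{Step 1: scaling.} First reduce to $R=1$. For $F$ on $B_+(R)$, set $F_1(z)=F(Rz)$ on $B_+(1)$. Then $F_1$ still satisfies \eqref{eq:F_ball}. If $F_1=\curl A_1$ with $\|A_1\|_{L^2(B_+(1))}\le C\|F_1\|_{L^2(B_+(1))}$, put $A(x)=R\,A_1(x/R)$. Then $\curl A=F$, and the conditions \eqref{eq:A_ball} are preserved. Counting the powers of $R$ from the change of variables gives $\|A\|_{L^2(B_+(R))}=R\cdot R^{3/2}\|A_1\|$ and $\|F\|_{L^2(B_+(R))}=R^{3/2}\|F_1\|$. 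This yields \eqref{eq:A_est} with a constant independent of $R$.

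\textbf{Step 2: extension for $R=1$.} Let $\Omega_2$ be a smoothed version of $B_+(2)$, chosen so that it satisfies \cref{as} and contains $B_+(1)$, with $\Gamma_0\subset\partial\Omega_2$. The boundary of $\Omega_2$ then consists of the flat part $\partial\R^3_+\cap\{|x|\le 2\}$ together with a cap.

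The extension $\tilde F$ is built as follows.
\begin{enumerate}
\item On the curved part $S=\partial B_+(1)\cap\R^3_+$, the normal trace $g=F\cdot\nu\in H^{-1/2}(S)$ is defined. Since $\div F=0$ and $F\cdot\nu=0$ on $\Gamma_0$, it has zero mean: $\langle g,1\rangle=0$.
\item Solve the Neumann problem $\Delta\phi=0$ in the shell $U=\Omega_2\setminus\overline{B_+(1)}$, with $\partial_\nu\phi=g$ on $S$ and $\partial_\nu\phi=0$ on the rest of $\partial U$. The zero-mean condition is exactly the compatibility condition for this problem. Lax--Milgram together with \eqref{eq:nor_tr} gives $\|\nabla\phi\|_{L^2(U)}\le C\|g\|_{H^{-1/2}(S)}\le C\|F\|_{\Hd{B_+(1)}}=C\|F\|_{L^2}$, because $\div F=0$.
\item Set $\tilde F=F$ in $B_+(1)$ and $\tilde F=\nabla\phi$ in $U$. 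Normal traces match across $S$, so $\tilde F\in\Hd{\Omega_2}$ is divergence-free. It is also tangential on all of $\partial\Omega_2$, since $\tilde F\cdot\nu=0$ on $\Gamma_0$ and on $\partial U\setminus S$.
\end{enumerate}

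\textbf{Step 3: conclusion.} Apply \cref{lem:vec_pot} on $\Omega_2$ to get $\tilde A$ with $\curl\tilde A=\tilde F$, $\div\tilde A=0$, $\tilde A\times\nu=0$ on $\partial\Omega_2\supset\Gamma_0$, and $\|\tilde A\|\le C\|\tilde F\|\le C\|F\|_{L^2(B_+(1))}$. Restricting $\tilde A$ to $B_+(1)$ gives the desired $A$.

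\textbf{Main obstacle.} The delicate point is Step 2. One must check that the Neumann problem in the shell $U$ is well posed, which needs $U$ connected and Lipschitz so that the Poincaré inequality holds. One must also check that the glued field really is divergence-free across $S$ in the weak sense. This is verified by testing with $\psi\in H^1(\Omega_2)$, splitting the integral over $B_+(1)$ and $U$, and using the definitions of the normal traces. The constant stays generic because after scaling the geometry is fixed.
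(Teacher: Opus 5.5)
Your proposal is correct and follows the paper's proof essentially step by step. The paper also scales to $R=1$, solves a harmonic Neumann problem in the shell $B_+(2)\setminus\overline{B_+(1)}$ with data $F\cdot\nu$ on the curved part, glues $\nabla\phi$ to $F$ to obtain a divergence-free field tangential on all of $\partial B_+(2)$, and applies \cref{lem:vec_pot} before restricting. The differences are cosmetic: you spell out the compatibility condition and the scaling, and your smoothing of $B_+(2)$ is unnecessary, since the half-ball is already Lipschitz, simply connected and has connected boundary, and the paper uses it directly.
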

\begin{proof}
    By a scaling argument, we only need to prove this result for $R = 1$. Denote
    \begin{equation*}
        D\coloneqq B_+(2)\setminus\overline{B_+(1)}, \quad \Gamma\coloneqq\partial B_+(1)\cap\R^3_+.
    \end{equation*}
    Let $\psi\in H^1(D)$ be a weak solution of the Neumann problem
    \begin{equation*}
        \begin{cases}
            \Delta\psi = 0 & \text{in}\ D, \\
            \partial_\nu\psi = F\cdot\nu & \text{on}\ \Gamma, \\
            \partial_\nu\psi = 0 & \text{on}\ \partial D\setminus\Gamma.
        \end{cases}
    \end{equation*}
    It is unique up to an additive constant, so we take the one satisfying $\int_D \psi\,dx = 0$. Then by the trace theorem \eqref{eq:tr} and Poincar\'e's inequality \eqref{eq:Poincare}, we have
    \begin{equation*}
        \|\nabla\psi\|_{L^2(D)}^2 = \langle F\cdot\nu, \psi\rangle_{\Gamma}\le \|\psi\|_{H^{1/2}(\Gamma)}\|F\cdot\nu\|_{H^{-1/2}(\Gamma)} \le C\|\nabla\psi\|_{L^2(D)}\|F\cdot\nu\|_{H^{-1/2}(\Gamma)}.
    \end{equation*}
    Therefore,
    \begin{equation*}
        \|\nabla\psi\|_{L^2(D)} \le C\|F\cdot\nu\|_{H^{-1/2}(\Gamma)} \le C\|F\|_{L^2(B_+(1))},
    \end{equation*}
    where the last inequality is due to the estimate \eqref{eq:nor_tr} of the normal trace and the fact that $F$ is solenoidal in $B_+(1)$. Now let
    \begin{equation*}
        \tilde{F} = \begin{cases}
            F & \text{in}\ B_+(1), \\
            \nabla\psi & \text{in}\ D.
        \end{cases}
    \end{equation*}
    Since $\tilde{F}\in L^2(B_+(2))$ satisfying
    \begin{equation*}
        \begin{cases}
            \div \tilde{F} = 0 & \text{in}\ B_+(2), \\ 
            \tilde{F}\cdot\nu = 0 & \text{on}\ \partial B_+(2),
        \end{cases}
    \end{equation*}
    by \cref{lem:vec_pot} it admits a vector potential $\tilde{A}\in\Hc{B_+(2)}$ satisfying
    \begin{equation*}
        \begin{cases}
            \div \tilde{A} = 0 & \text{in}\ B_+(2), \\
            \tilde{A}\times\nu = 0 & \text{on}\ \partial B_+(2)
        \end{cases}
    \end{equation*}
    and
    \begin{equation*}
        \|\tilde{A}\|_{L^2(B_+(2))} \le C\|\tilde{F}\|_{L^2(B_+(2))} \le C\|F\|_{L^2(B_+(1))}.
    \end{equation*}
    Then $A\coloneqq\tilde{A}|_{B_+(1)}$ is a vector potential of $F$ and satisfies \eqref{eq:A_ball} and \eqref{eq:A_est} with $R = 1$.
\end{proof}

\begin{proof}[Proof of \cref{thm:stab_ball}]
    It remains to prove \eqref{eq:stab}. Analogously to the proof of \cref{prop:weighted}, by applying \cref{lem:A_ball} to $J_1$ instead of using \cref{cor:J_A} we have
    \begin{align}
        \int_{B_+(R)} |\sigma_1 - \sigma_2||J_1|^2\,dx &\le C(\lambda,\Lambda) \|A\|_{L^2(B_+(R))}\|J_1 - J_2\|_{\Hc{B_+(R)}} \nonumber \\
        &\le C(\lambda,\Lambda) R \|J_1\|_{L^2(B_+(R))}\|J_1 - J_2\|_{\Hc{B_+(R)}}. \label{eq:weighted_ball}
    \end{align}
    Now we estimate the $L^2$ norm of $J_1$. By Caccioppoli's inequality, the weak solution $w_1$ of \eqref{eq:w} with $\sigma = \sigma_1$ satisfies the estimate
    \begin{equation*}
        \|\nabla w_1\|_{L^2(B_+(R))} \le C(\lambda)\left(R\|\nabla\sigma_1\cdot C_L\|_{L^2(B_+(R))} + \frac{1}{R}\|w_1\|_{L^2(B_+(2R))}\right).
    \end{equation*}
    Using the Green function representation \eqref{eq:Green} and the pointwise estimate \eqref{eq:G} we derive
    \begin{equation*}
        \|w_1\|_{L^2(B_+(2R))} \le C(\lambda) R^2 \|\nabla\sigma_1\cdot C_L\|_{L^2(B_+(R))}.
    \end{equation*}
    So 
    \begin{equation*}
        \|\nabla w_1\|_{L^2(B_+(R))} \le C(\lambda) R \|\nabla\sigma_1\cdot C_L\|_{L^2(B_+(R))}
    \end{equation*}
    and by the smallness condition \eqref{eq:small}, we have
    \begin{equation*}
        \|\nabla w_1\|_{L^2(B_+(R))} \le C(\lambda,\Lambda,c) R^{1/2}.
    \end{equation*}
    Moreover, according to \eqref{eq:C_L}, we know that $|C_L| \le Ca^{-2}\Area D$, leading to the estimate
    \begin{equation*}
        \|C_L\|_{L^2(B_+(R))} \le C(a,\Area D) R^{3/2}.
    \end{equation*}
    Thus,
    \begin{equation}\label{eq:J1_bound}
        \|J_1\|_{L^2(B_+(R))} \le \lambda^{-1}\Big(\|C_L\|_{L^2(B_+(R))} + \|\nabla w_1\|_{L^2(B_+(R))}\Big) \le C(\lambda,\Lambda,a,c,\Area D) R^{3/2}.
    \end{equation}
    Finally, \eqref{eq:stab} follows from \eqref{eq:weighted_ball}, \eqref{eq:J1_bound} and the previous result that $|J_1|\ge \lambda c/2$ in $\Omega'\subset B_+(R)$.
\end{proof}

\section{Concluding remarks}\label{sec:con}

In conclusion, this work provides a rigorous mathematical foundation for MAET with magnetic field measurements. We formulate a quasi-static model that accurately captures the underlying physics and introduce an adjoint problem to decouple the hybrid inverse problem. This decoupling yields a well-studied acoustic inverse source problem and a critical inverse conductivity problem with internal data. Our primary theoretical contribution lies in addressing the latter, namely recovering the conductivity from an internal adjoint current density. By a weighted interpolation argument and leveraging the analytic properties of the coil-generated fields, we establish an interior H\"older stability result without imposing a pointwise nonzero constraint on the internal data. Under a stronger nonzero constraint ensured by suitable smallness conditions on the conductivity and coil configuration, we further derive Lipschitz stability in regions of interest, both in bounded domains and in the half-space setting.

While the analysis identifies mechanisms that can lead to stable reconstruction, it does not provide a complete practical criterion for optimal coil design. Understanding how these conditions affect reconstruction quality in realistic configurations, and how multiple coils may improve stability and effectiveness, remains an important direction for future work. In particular, numerical studies on coil configuration, including position, orientation and number, would be valuable for assessing the practical performance of the proposed reconstruction framework.

\bibliographystyle{abbrv}
\bibliography{ref}

@book{Adams2003,
  title = {Sobolev {{Spaces}}},
  author = {Adams, Robert A. and Fournier, John J. F.},
  year = 2003,
  series = {Pure and Applied Mathematics},
  edition = {2nd},
  volume = {140},
  publisher = {Academic Press},
  address = {Oxford},
  langid = {english}
}

@book{Evans2010,
  title = {Partial Differential Equations},
  author = {Evans, Lawrence C.},
  year = 2010,
  series = {Graduate Studies in Mathematics},
  edition = {2nd},
  volume = {19},
  publisher = {American Mathematical Society},
  address = {Providence, Rhode Island},
  langid = {english},
  lccn = {515.353}
}

@article{Gruter1982,
  title = {The {{Green}} Function for Uniformly Elliptic Equations},
  author = {Gr{\"u}ter, Michael and Widman, Kjell-Ove},
  year = 1982,
  journal = {manuscripta mathematica},
  volume = {37},
  number = {3},
  pages = {303--342},
  doi = {10.1007/BF01166225},
  urldate = {2022-06-23},
  langid = {english}
}

@article{Hofmann2007,
  title = {The {{Green}} Function Estimates for Strongly Elliptic Systems of Second Order},
  author = {Hofmann, Steve and Kim, Seick},
  year = 2007,
  journal = {manuscripta mathematica},
  volume = {124},
  number = {2},
  pages = {139--172},
  doi = {10.1007/s00229-007-0107-1},
  urldate = {2022-03-30},
  langid = {english}
}

@article{Kang2010,
  title = {Global Pointwise Estimates for {{Green}}'s Matrix of Second Order Elliptic Systems},
  author = {Kang, Kyungkeun and Kim, Seick},
  year = 2010,
  journal = {Journal of Differential Equations},
  volume = {249},
  number = {11},
  pages = {2643--2662},
  doi = {10.1016/j.jde.2010.05.017},
  urldate = {2025-03-28},
  copyright = {https://www.elsevier.com/tdm/userlicense/1.0/},
  langid = {english}
}

@book{Lieberman2013,
  title = {Oblique Derivative Problems for Elliptic Equations},
  author = {Lieberman, Gary M.},
  year = 2013,
  publisher = {World Scientific},
  address = {Singapore},
  doi = {10.1142/8679},
  lccn = {QA377 .L528 2013}
}

@book{Alberti2018,
  title = {{Lectures on Elliptic Methods for Hybrid Inverse Problems}},
  author = {Alberti, Giovanni S. and Capdeboscq, Yves},
  year = 2018,
  series = {{Collection SMF Cours sp\'ecialis\'es}},
  volume = {25},
  publisher = {Soci\'et\'e math\'ematique de France},
  address = {Paris},
  langid = {fre eng}
}

@article{Alberti2022a,
  title = {Non-Zero Constraints in Elliptic {{PDE}} with Random Boundary Values and Applications to Hybrid Inverse Problems},
  author = {Alberti, Giovanni S.},
  year = 2022,
  journal = {Inverse Problems},
  volume = {38},
  number = {12},
  pages = {124005},
  doi = {10.1088/1361-6420/ac9924},
  urldate = {2022-12-28},
  langid = {english}
}

@incollection{Bal2013a,
  title = {Hybrid Inverse Problems and Internal Functionals},
  booktitle = {Inverse Problems and Applications: Inside out {{II}}},
  author = {Bal, Guillaume},
  editor = {Uhlmann, Gunther},
  year = 2013,
  series = {Mathematical {{Sciences Research Institute Publications}}},
  volume = {60},
  pages = {325--368},
  publisher = {Cambridge University Press},
  address = {Cambridge},
  langid = {english}
}

@article{Choulli2021,
  title = {Some Stability Inequalities for Hybrid Inverse Problems},
  author = {Choulli, Mourad},
  year = 2021,
  journal = {Comptes Rendus. Math\'ematique},
  volume = {359},
  number = {10},
  pages = {1251--1265},
  doi = {10.5802/crmath.262},
  urldate = {2025-04-02},
  langid = {english}
}

@article{Widlak2012,
  title = {Hybrid Tomography for Conductivity Imaging},
  author = {Widlak, Thomas and Scherzer, Otmar},
  year = 2012,
  journal = {Inverse Problems},
  volume = {28},
  number = {8},
  pages = {084008},
  doi = {10.1088/0266-5611/28/8/084008},
  urldate = {2022-03-30},
  langid = {english}
}

@article{Choulli2019a,
  title = {H\"older Stability for an Inverse Medium Problem with Internal Data},
  author = {Choulli, Mourad and Triki, Faouzi},
  year = 2019,
  journal = {Research in the Mathematical Sciences},
  volume = {6},
  number = {1},
  publisher = {Springer},
  doi = {10.1007/s40687-018-0171-z},
  urldate = {2025-10-31},
  chapter = {9},
  langid = {english}
}

@article{Uhlmann2009,
  title = {Electrical Impedance Tomography and {{Calder\'on}}'s Problem},
  author = {Uhlmann, Gunther},
  year = 2009,
  journal = {Inverse Problems},
  volume = {25},
  number = {12},
  pages = {123011},
  doi = {10.1088/0266-5611/25/12/123011},
  urldate = {2022-03-30},
  langid = {english}
}

@article{Haider2008,
  title = {Magneto-Acousto-Electrical Tomography: A Potential Method for Imaging Current Density and Electrical Impedance},
  shorttitle = {Magneto-Acousto-Electrical Tomography},
  author = {Haider, Saja and Hrbek, A. and Xu, Yuan},
  year = 2008,
  journal = {Physiological Measurement},
  volume = {29},
  number = {6},
  pages = {S41-S50},
  doi = {10.1088/0967-3334/29/6/S04},
  urldate = {2022-03-30},
  langid = {english}
}

@article{Kunyansky2012,
  title = {A Mathematical Model and Inversion Procedure for Magneto-Acousto-Electric Tomography},
  author = {Kunyansky, Leonid},
  year = 2012,
  journal = {Inverse Problems},
  volume = {28},
  number = {3},
  pages = {035002},
  doi = {10.1088/0266-5611/28/3/035002},
  urldate = {2022-03-30},
  langid = {english}
}

@article{Kunyansky2017,
  title = {Rotational Magneto-Acousto-Electric Tomography ({{MAET}}): Theory and Experimental Validation},
  shorttitle = {Rotational Magneto-Acousto-Electric Tomography ({{MAET}})},
  author = {Kunyansky, Leonid and Ingram, Charles P. and Witte, Russell S.},
  year = 2017,
  journal = {Physics in Medicine \& Biology},
  volume = {62},
  number = {8},
  pages = {3025--3050},
  doi = {10.1088/1361-6560/aa6222},
  urldate = {2022-03-30},
  langid = {english}
}

@article{Kunyansky2023,
  title = {Weighted {{Radon}} Transforms of Vector Fields, with Applications to Magnetoacoustoelectric Tomography},
  author = {Kunyansky, Leonid and McDugald, Edward and Shearer, Benjamin},
  year = 2023,
  journal = {Inverse Problems},
  volume = {39},
  number = {6},
  pages = {065014},
  doi = {10.1088/1361-6420/acd07a},
  urldate = {2023-05-22},
  langid = {english}
}

@article{Roth2009,
  title = {Ultrasonically-Induced {{Lorentz}} Force Tomography},
  author = {Roth, Bradley J. and Schalte, Kevin},
  year = 2009,
  journal = {Medical \& Biological Engineering \& Computing},
  volume = {47},
  number = {6},
  pages = {573--577},
  doi = {10.1007/s11517-009-0476-6},
  urldate = {2022-03-30},
  langid = {english}
}

@article{Stewart2011,
  title = {Longitudinal and Transverse Components of a Vector Field},
  author = {Stewart, Andrew M.},
  year = 2011,
  journal = {Sri Lankan Journal of Physics},
  volume = {12},
  eprint = {0801.0335},
  primaryclass = {physics},
  pages = {33--42},
  doi = {10.4038/sljp.v12i0.3504},
  urldate = {2024-04-02},
  archiveprefix = {arXiv}
}

@article{Wen1998,
  title = {Hall Effect Imaging},
  author = {Wen, Han and Shah, Jatin and Balaban, Robert S.},
  year = 1998,
  journal = {IEEE Transactions on Biomedical Engineering},
  volume = {45},
  number = {1},
  pages = {119--124},
  doi = {10.1109/10.650364},
  urldate = {2022-03-30},
  langid = {english}
}

@article{Gozu2024,
  title = {Analyzing Pulse Compression Performance and Image Quality Metrics of Different Excitations in {{MAET}} with Magnetic Field Measurements},
  author = {G{\"o}z{\"u}, Mehmet Soner and Gen{\c c}er, Nevzat G{\"u}neri},
  year = 2024,
  journal = {International Journal for Numerical Methods in Biomedical Engineering},
  volume = {40},
  number = {12},
  pages = {e3890},
  doi = {10.1002/cnm.3890},
  urldate = {2024-12-19},
  copyright = {\copyright{} 2024 John Wiley \& Sons Ltd.},
  langid = {english}
}

@article{Guo2015,
  title = {Magneto-Acousto-Electrical Tomography with Magnetic Induction for Conductivity Reconstruction},
  author = {Guo, Liang and Liu, Guoqiang and Xia, Hui},
  year = 2015,
  journal = {IEEE Transactions on Biomedical Engineering},
  volume = {62},
  number = {9},
  pages = {2114--2124},
  doi = {10.1109/TBME.2014.2382562},
  urldate = {2022-03-30},
  langid = {english}
}

@article{Kaboutari2019,
  title = {Data Acquisition System for {{MAET}} with Magnetic Field Measurements},
  author = {Kaboutari, Keivan and Tetik, Ahmet {\"O}nder and Ghalichi, Elyar and G{\"o}z{\"u}, Mehmet Soner and Zengin, Reyhan and Gen{\c c}er, Nevzat G{\"u}neri},
  year = 2019,
  journal = {Physics in Medicine \& Biology},
  volume = {64},
  number = {11},
  pages = {115016},
  publisher = {IOP Publishing},
  doi = {10.1088/1361-6560/ab1809},
  urldate = {2024-12-19},
  langid = {english}
}

@inproceedings{Tetik2023,
  title = {{{MAET}} with Magnetic Field Measurements Using Circular and Figure-of-Eight Coils},
  booktitle = {2023 {{IEEE Biomedical Circuits}} and {{Systems Conference}} ({{BioCAS}})},
  author = {Tetik, Ahmet {\"O}nder and Gen{\c c}er, Nevzat G{\"u}neri},
  year = 2023,
  pages = {1--5},
  doi = {10.1109/BioCAS58349.2023.10389086},
  urldate = {2024-12-19}
}

@phdthesis{Zengin2012,
  title = {Electrical Impedance Tomography Using {{Lorentz}} Fields},
  author = {Zengin, Reyhan},
  year = 2012,
  urldate = {2023-05-23},
  langid = {english},
  school = {Middle East Technical University}
}

@article{Zengin2016,
  title = {Lorentz Force Electrical Impedance Tomography Using Magnetic Field Measurements},
  author = {Zengin, Reyhan and Gen{\c c}er, Nevzat G{\"u}neri},
  year = 2016,
  journal = {Physics in Medicine \& Biology},
  volume = {61},
  number = {16},
  pages = {5887--5905},
  doi = {10.1088/0031-9155/61/16/5887},
  urldate = {2022-03-30},
  langid = {english}
}

@article{Amrouche1998,
  title = {Vector Potentials in Three-Dimensional Non-Smooth Domains},
  author = {Amrouche, Ch{\'e}rif and Bernardi, Christine and Dauge, Monique and Girault, Vivette},
  year = 1998,
  journal = {Mathematical Methods in the Applied Sciences},
  volume = {21},
  number = {9},
  pages = {823--864},
  doi = {10.1002/(SICI)1099-1476(199806)21:9<823::AID-MMA976>3.0.CO;2-B},
  urldate = {2023-07-30},
  langid = {english}
}

@article{Amrouche2013,
  title = {Lp-Theory for Vector Potentials and {{Sobolev}}'s Inequalities for Vector Fields: Application to the {{Stokes}} Equations with Pressure Boundary Conditions},
  author = {Amrouche, Ch{\'e}rif and Seloula, Nour El Houda},
  year = 2013,
  journal = {Mathematical Models and Methods in Applied Sciences},
  volume = {23},
  number = {01},
  pages = {37--92},
  publisher = {World Scientific Publishing Co.},
  doi = {10.1142/S0218202512500455},
  urldate = {2026-01-05}
}

@article{Ammari2015,
  title = {A Mathematical and Numerical Framework for Magnetoacoustic Tomography with Magnetic Induction},
  author = {Ammari, Habib and Boulmier, Simon and Millien, Pierre},
  year = 2015,
  journal = {Journal of Differential Equations},
  volume = {259},
  number = {10},
  pages = {5379--5405},
  doi = {10.1016/j.jde.2015.06.040},
  urldate = {2022-03-30},
  langid = {english}
}

@article{Qiu2015,
  title = {Analysis of the Magnetoacoustic Tomography with Magnetic Induction},
  author = {Qiu, Lingyun and Santosa, Fadil},
  year = 2015,
  journal = {SIAM Journal on Imaging Sciences},
  volume = {8},
  number = {3},
  pages = {2070--2086},
  doi = {10.1137/15M1012323},
  urldate = {2022-03-30},
  langid = {english}
}

@article{Bonnetier2022,
  title = {Stability for Quantitative Photoacoustic Tomography Revisited},
  author = {Bonnetier, Eric and Choulli, Mourad and Triki, Faouzi},
  year = 2022,
  journal = {Research in the Mathematical Sciences},
  volume = {9},
  number = {2},
  doi = {10.1007/s40687-022-00322-6},
  urldate = {2022-05-27},
  chapter = {24},
  langid = {english}
}

@article{Agranovsky2023,
  title = {On the Exactness of the Universal Backprojection Formula for the Spherical Means {{Radon}} Transform},
  author = {Agranovsky, Mark and Kunyansky, Leonid},
  year = 2023,
  journal = {Inverse Problems},
  volume = {39},
  number = {3},
  pages = {035002},
  doi = {10.1088/1361-6420/acb2ee}
}

@article{Andersson1988,
  title = {On the Determination of a Function from Spherical Averages},
  author = {Andersson, Lars-Erik},
  year = 1988,
  journal = {SIAM Journal on Mathematical Analysis},
  volume = {19},
  number = {1},
  pages = {214--232},
  publisher = {{Society for Industrial and Applied Mathematics}},
  doi = {10.1137/0519016},
  urldate = {2025-12-10}
}

@article{Aramyan2024,
  title = {Recovering a Function from Spherical Means in {{3D}} Using Local Data},
  author = {Aramyan, Rafik},
  year = 2024,
  journal = {Inverse Problems and Imaging},
  volume = {18},
  number = {3},
  pages = {690--707},
  doi = {10.3934/ipi.2023050},
  urldate = {2025-12-10}
}

@article{Bal2011a,
  title = {Quantitative Thermo-Acoustics and Related Problems},
  author = {Bal, Guillaume and Ren, Kui and Uhlmann, Gunther and Zhou, Ting},
  year = 2011,
  journal = {Inverse Problems},
  volume = {27},
  number = {5},
  pages = {055007},
  publisher = {IOP Publishing},
  doi = {10.1088/0266-5611/27/5/055007},
  urldate = {2022-07-08},
  langid = {english}
}

@article{Do2018,
  title = {Theoretically Exact Photoacoustic Reconstruction from Spatially and Temporally Reduced Data},
  author = {Do, Ngoc and Kunyansky, Leonid},
  year = 2018,
  journal = {Inverse Problems},
  volume = {34},
  number = {9},
  pages = {094004},
  doi = {10.1088/1361-6420/aacfac},
  urldate = {2022-03-30},
  langid = {english}
}

@article{Finch2004,
  title = {Determining a Function from Its Mean Values over a Family of Spheres},
  author = {Finch, David and Patch, Sarah K. and {Rakesh}},
  year = 2004,
  journal = {SIAM Journal on Mathematical Analysis},
  volume = {35},
  number = {5},
  pages = {1213--1240},
  doi = {10.1137/S0036141002417814},
  urldate = {2022-03-30},
  langid = {english}
}

@article{Haltmeier2014,
  title = {Universal Inversion Formulas for Recovering a Function from Spherical Means},
  author = {Haltmeier, Markus},
  year = 2014,
  journal = {SIAM Journal on Mathematical Analysis},
  volume = {46},
  number = {1},
  pages = {214--232},
  doi = {10.1137/120881270},
  urldate = {2022-03-30},
  langid = {english}
}

@article{Hristova2008,
  title = {Reconstruction and Time Reversal in Thermoacoustic Tomography in Acoustically Homogeneous and Inhomogeneous Media},
  author = {Hristova, Yulia and Kuchment, Peter and Nguyen, Linh},
  year = 2008,
  journal = {Inverse Problems},
  volume = {24},
  number = {5},
  pages = {055006},
  doi = {10.1088/0266-5611/24/5/055006},
  urldate = {2022-03-30},
  langid = {english}
}

@incollection{Kuchment2011a,
  title = {Mathematics of Photoacoustic and Thermoacoustic Tomography},
  booktitle = {Handbook of {{Mathematical Methods}} in {{Imaging}}},
  author = {Kuchment, Peter and Kunyansky, Leonid},
  editor = {Scherzer, Otmar},
  year = 2011,
  pages = {817--865},
  publisher = {Springer},
  address = {New York},
  doi = {10.1007/978-0-387-92920-0_19},
  urldate = {2022-11-24},
  langid = {english}
}

@article{Kunyansky2007,
  title = {A Series Solution and a Fast Algorithm for the Inversion of the Spherical Mean {{Radon}} Transform},
  author = {Kunyansky, Leonid},
  year = 2007,
  journal = {Inverse Problems},
  volume = {23},
  number = {6},
  pages = {S11-S20},
  doi = {10.1088/0266-5611/23/6/S02},
  urldate = {2022-03-30},
  langid = {english}
}

@article{Kunyansky2007a,
  title = {Explicit Inversion Formulae for the Spherical Mean {{Radon}} Transform},
  author = {Kunyansky, Leonid},
  year = 2007,
  journal = {Inverse Problems},
  volume = {23},
  number = {1},
  pages = {373--383},
  doi = {10.1088/0266-5611/23/1/021},
  urldate = {2022-03-30},
  langid = {english}
}

@article{Kunyansky2008,
  title = {Thermoacoustic Tomography with Detectors on an Open Curve: An Efficient Reconstruction Algorithm},
  shorttitle = {Thermoacoustic Tomography with Detectors on an Open Curve},
  author = {Kunyansky, Leonid},
  year = 2008,
  journal = {Inverse Problems},
  volume = {24},
  number = {5},
  pages = {055021},
  doi = {10.1088/0266-5611/24/5/055021},
  urldate = {2022-03-30},
  langid = {english}
}

@article{Stefanov2009,
  title = {Thermoacoustic Tomography with Variable Sound Speed},
  author = {Stefanov, Plamen and Uhlmann, Gunther},
  year = 2009,
  journal = {Inverse Problems},
  volume = {25},
  number = {7},
  pages = {075011},
  doi = {10.1088/0266-5611/25/7/075011},
  urldate = {2022-03-30},
  langid = {english}
}

@misc{Kian2025,
  title = {Uniqueness and Stability in Determining the Wave Equation from a Single Passive Boundary Measurement},
  author = {Kian, Yavar and Liu, Hongyu},
  year = 2025,
  number = {arXiv:2507.10012},
  eprint = {2507.10012},
  primaryclass = {math},
  publisher = {arXiv},
  doi = {10.48550/arXiv.2507.10012},
  urldate = {2025-08-04},
  archiveprefix = {arXiv}
}

@article{Kian2025a,
  title = {Determination of the Sound Speed and an Initial Source in Photoacoustic Tomography},
  author = {Kian, Yavar and Uhlmann, Gunther},
  year = 2025,
  journal = {Transactions of the American Mathematical Society},
  volume = {378},
  number = {8},
  pages = {5329--5353},
  doi = {10.1090/tran/9467},
  urldate = {2025-06-03},
  copyright = {https://www.ams.org/publications/copyright-and-permissions},
  langid = {english}
}

@article{Liu2015a,
  title = {Determining Both Sound Speed and Internal Source in Thermo- and Photo-Acoustic Tomography},
  author = {Liu, Hongyu and Uhlmann, Gunther},
  year = 2015,
  journal = {Inverse Problems},
  volume = {31},
  number = {10},
  pages = {105005},
  doi = {10.1088/0266-5611/31/10/105005},
  urldate = {2025-08-04},
  langid = {english}
}

@article{Qiu2023,
  title = {Lipschitz Stability of Recovering the Conductivity from Internal Current Densities},
  author = {Qiu, Lingyun and Zheng, Siqin},
  year = 2023,
  journal = {Inverse Problems},
  volume = {39},
  number = {8},
  pages = {085010},
  publisher = {IOP Publishing},
  doi = {10.1088/1361-6420/acdea0},
  urldate = {2023-07-01},
  copyright = {All rights reserved},
  langid = {english}
}

@article{Gencer1999,
  title = {Electrical Conductivity Imaging via Contactless Measurements},
  author = {Gen{\c c}er, Nevzat G{\"u}neri and Tek, M. Nejat},
  year = 1999,
  journal = {IEEE Transactions on Medical Imaging},
  volume = {18},
  number = {7},
  pages = {617--627},
  doi = {10.1109/42.790461},
  urldate = {2022-03-30},
  langid = {english}
}

@article{Tarjan1968,
  title = {Electrodeless Measurements of the Effective Resistivity of the Human Torso and Head by Magnetic Induction},
  author = {Tarjan, Peter P. and McFee, Richard},
  year = 1968,
  journal = {IEEE Transactions on Biomedical Engineering},
  volume = {BME-15},
  number = {4},
  pages = {266--278},
  doi = {10.1109/TBME.1968.4502577},
  urldate = {2026-02-13}
}

\end{document}